\numberwithin{equation}{section}
\theoremstyle{plain}
\newtheorem{theorem}{Theorem}[section]
\newtheorem*{theorem*}{Theorem}
\newtheorem*{lemma*}{Lemma}
\newtheorem{lemma}[theorem]{Lemma}
\newtheorem{proposition}[theorem]{Proposition}
\newtheorem{corollary}[theorem]{Corollary}
\theoremstyle{definition}
\theoremstyle{remark}
\newcommand{\ep}{\varepsilon}
\newcommand{\NN}{\mathbb{N}}
\newcommand{\R}{\mathbb{R}}
\newcommand{\C}{\mathbb{C}}
\newcommand{\D}{\mathcal{D}}
\newcommand\supp{\mathop{\rm supp}}
\newcommand\real{\mathop{\rm Re}}
\newcommand\imag{\mathop{\rm Im}}
\newcommand*{\defeq}{\mathrel{\vcenter{\baselineskip0.5ex \lineskiplimit0pt

                     \hbox{\scriptsize.}\hbox{\scriptsize.}}}%
                     =}
\newcommand*{\qefed}{=\mathrel{\vcenter{\baselineskip0.5ex \lineskiplimit0pt

                     \hbox{\scriptsize.}\hbox{\scriptsize.}}}}
\begin{document}
\title[Exponential decay for BV coefficients]{Exponential time-decay for a one dimensional wave equation with coefficients of bounded variation}

\author{Kiril Datchev}
\address{Department of Mathematics, Purdue University,  West Lafayette, IN, 47907-2067, USA}
\email{kdatchev@purdue.edu}
\author{Jacob Shapiro}
\address{Department of Mathematics, University of Dayton, Dayton, OH 45469-2316, USA}
\email{jshapiro1@udayton.edu}
\thanks{Second author is the corresponding author}

\keywords{resolvent estimate, Schr\"odinger operator, wave decay}

\maketitle

\begin{abstract}
We consider the initial-value problem for a one-dimensional wave equation with coefficients that are positive, constant outside of an interval, and have bounded variation (BV). Under the assumption of compact support of the initial data, we prove that the local energy decays exponentially fast in time, and provide the explicit constant to which the solution converges. The key ingredient of the proof is a high frequency resolvent estimate for an associated Helmholtz operator with a BV potential.
 \end{abstract}

\section{Introduction and statement of results} \label{introduction}

This paper establishes exponential local energy decay for the solution of the following one dimensional wave equation, with compactly supported initial data:
\begin{equation} \label{wave eqn}
\begin{cases}
\beta(x) \partial_t^2 w(x,t) - \partial_x (\alpha(x) \partial_x w(x,t)) = 0, \qquad (x,t) \in \R \times (0, \infty), \\
w(x,0) = w_0(x),  \\
\partial_t w(x,0) = w_1(x), \\
\supp w_0, \, \supp w_1 \subseteq (-R,R), \qquad R > 0. 
\end{cases} 
\end{equation}
Here, the coefficients $\alpha, \beta : \R \to (0,\infty)$ have bounded variation (BV). We suppose also
\begin{equation} \label{infs positive}
    \inf_\R \alpha, \, \inf_\R \beta > 0,
\end{equation}
and that there exist $R_0, \, \alpha_0, \, \beta_0 > 0$, so that
\begin{equation} \label{perturbations}
    \alpha(x) = \alpha_0, \, \beta(x) = \beta_0, \qquad |x| \ge R_0.
 \end{equation} 
 
To begin, we address the well-posedness of \eqref{wave eqn} via the spectral theorem for self-adjoint operators. Let $\mathcal H$ be the Hilbert space  $L^2(\mathbb R; \beta(x)dx)$ equipped with the inner product 
\[\langle u,v \rangle_{\mathcal H} \defeq \int_{\R}\overline{u}(x) v(x) \beta(x) dx.\]
(Note that $L^2(\mathbb R; \beta(x)dx) = L^2(\mathbb R; dx)$ as sets, and their respective norms generate the same topology, since $\beta$ has positive upper and lower bounds.) Define the symmetric, nonnegative differential operator
\begin{equation} \label{H}
    H u \defeq - \beta^{-1} \partial_x (\alpha  \partial_x u),
\end{equation}
 with domain  $\mathcal D(H) \defeq \{u \in L^2(\R) : u, \partial_x u \in L^2(\R) \cap L^\infty(\R), \text{ and } \partial_x(\alpha \partial_xu) \in L^2(\R) \}$. We will see from Lemma \ref{self adjointness lemma} in Section \ref{wtd resolv est section} that $H$ is self-adjoint with respect to $\mathcal{D}(H)$. It is also conveniently the case that $D(H^{1/2})$ coincides with the Sobolev space $H^1(\R)$ \cite{re22b}. For completeness, we prove this fact in Appendix \ref{H1 appendix}. 
 
 Thus, for initial conditions $w_0 \in \mathcal{D}(H)$, $w_1 \in \mathcal{D}(H^{1/2})$, 
 \begin{equation} \label{soln spectral thm}
w(t) = w(\cdot, t) = \cos(t H^{1/2}) w_0 + \frac{\sin(tH^{1/2})}{H^{1/2}} w_1.
\end{equation}
 is the unique function $w \in C^2((0,\infty), \mathcal{H})$ with $w(0) = w_0$, $\partial_tw(0) = w_1$, and for all $t > 0$, $w(t) \in \mathcal{D}(H)$ and $\partial^2_t w(t) +Hw(t)=0$. 
 
 \begin{theorem} \label{LED thm}
 Let $\alpha, \, \beta : \R \to (0,\infty)$ have bounded variation and satisfy \eqref{infs positive} and \eqref{perturbations}. Suppose $w_0 \in \mathcal{D}(H), \, w_1 \in \mathcal{D}(H^{1/2})$, and $\supp w_0,\, \supp w_1 \subseteq (-R, R)$ for some $R> 0$. Let $w(t)$ be given by \eqref{soln spectral thm}. For any   $R_1 > 0$, there exist $C, c > 0$ so that 
\begin{equation} \label{LED}
\begin{gathered}
 \| w(\cdot, t) - w_\infty \|_{H^1(-R_1,R_1)} +
    \|\partial_t w(\cdot, t)  \|_{L^2(-R_1,R_1)} \\
   \le C e^{-c t} (\| w_0\|_{H^1(\R)} + \|w_1\|_{L^2(\R)}), \qquad t > 0,
   \end{gathered}
\end{equation}
where 
   \begin{equation} \label{w infty}
       w_\infty \defeq  \frac{1}{2(\alpha_0 \beta_0)^{1/2}}\int_{\R} w_1(x) \beta(x) dx.
   \end{equation}
\end{theorem}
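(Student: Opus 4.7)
The plan is a contour deformation argument. For $w_0, w_1 \in C_c^\infty(\R)$, taking the Laplace transform in time of $\partial_t^2 w + Hw = 0$ with initial data $(w_0,w_1)$ gives, for $\mathrm{Re}\,\zeta > 0$, the algebraic identity $\hat{w}(\zeta) = (H+\zeta^2)^{-1}(\zeta w_0 + w_1)$. Reparameterizing $\zeta = -i\lambda$ with $\mathrm{Im}\,\lambda > 0$ and inverting produces
\[
 w(\cdot, t) = \frac{1}{2\pi} \int_{\mathrm{Im}\,\lambda = a} e^{-i\lambda t}\, R(\lambda)\bigl(w_1 - i\lambda w_0\bigr)\,d\lambda, \qquad a > 0,
\]
where $R(\lambda) = (H - \lambda^2)^{-1}$ is the outgoing resolvent. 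Fixing $\chi \in C_c^\infty(\R)$ equal to $1$ on $[-R_*,R_*]$ with $R_* = \max(R,R_0,R_1)$ (so $\chi w_0 = w_0$, $\chi w_1 = w_1$), multiplication by $\chi$ replaces $R(\lambda)$ by the cutoff resolvent $\chi R(\lambda)\chi$ in the integrand.

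Next I would establish the meromorphic continuation of $\chi R(\lambda)\chi$ from $\{\mathrm{Im}\,\lambda > 0\}$ across the real axis into a strip $\{\mathrm{Im}\,\lambda > -c_0\}$. This is standard via the resolvent identity $R(\lambda) = R_0(\lambda)\bigl(I + K(\lambda)\bigr)^{-1}$, where $R_0$ is the explicit outgoing resolvent for the constant coefficient operator $-\alpha_0\beta_0^{-1}\partial_x^2$ (with kernel proportional to $e^{i\lambda|x-y|/c}$, $c = (\alpha_0/\beta_0)^{1/2}$), and $K(\lambda)$ is compact and entire in $\lambda$ since the perturbation is compactly supported; analytic Fredholm theory then yields a discrete pole set. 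At $\lambda = 0$ the free resolvent has a simple pole with residue $f \mapsto \tfrac{i}{2(\alpha_0\beta_0)^{1/2}}\int f\,\beta_0\,dy$, and a low-frequency expansion of the perturbed cutoff resolvent yields $\chi R(\lambda)\chi = \lambda^{-1}\Pi + A(\lambda)$ with $A$ holomorphic near $0$ and
\[
 \Pi f = \frac{i}{2(\alpha_0\beta_0)^{1/2}}\Bigl(\int_{\R} f(y)\beta(y)\,dy\Bigr)\chi,
\]
the variable weight $\beta$ replacing $\beta_0$ because the perturbed zero-mode pairing is taken in the $\mathcal{H}$-inner product. In particular $-i\Pi w_1 = \chi w_\infty$.

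I would then choose $c \in (0, c_0)$ such that $\chi R(\lambda)\chi$ has no poles in $\{-c \le \mathrm{Im}\,\lambda < 0\}$; such $c$ exists since the resonance set is discrete. Shifting the contour from $\mathrm{Im}\,\lambda = a$ down to $\mathrm{Im}\,\lambda = -c$ through Cauchy's theorem picks up precisely the residue at $\lambda = 0$, yielding
\[
 \chi w(\cdot, t) - \chi w_\infty = \frac{1}{2\pi}\int_{\mathrm{Im}\,\lambda = -c} e^{-i\lambda t}\,\chi R(\lambda)\chi\,\bigl(w_1 - i\lambda w_0\bigr)\,d\lambda.
\]
On this contour $|e^{-i\lambda t}| = e^{-ct}$, while the paper's high frequency resolvent estimate bounds $\|\chi R(\lambda)\chi\|_{L^2 \to H^1}$ polynomially in $|\lambda|$, sufficient (possibly after an integration by parts in $\lambda$ to absorb the $\lambda w_0$ factor) to dominate the shifted integral by $Ce^{-ct}(\|w_0\|_{H^1}+\|w_1\|_{L^2})$. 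Differentiating the representation in $t$ (bringing down $-i\lambda$) gives the $L^2$ bound on $\partial_t w$, and density in $\mathcal{D}(H)\times \mathcal{D}(H^{1/2})$ (together with the conserved energy) extends to general data.

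The principal obstacle is the low-frequency residue computation: one must show that the perturbed cutoff resolvent has a simple pole at $0$ with residue \emph{exactly} $\Pi$, so that the variable $\beta$ correctly appears in place of $\beta_0$; this requires a careful perturbative expansion rather than a direct quote of the free formula. A secondary but technically essential point is the contour convergence on $\{\mathrm{Im}\,\lambda = -c\}$, where the paper's high-frequency resolvent estimate with range in $H^1$ does the decisive work, both for absolute convergence at large $|\mathrm{Re}\,\lambda|$ and for upgrading the norm in which exponential decay is obtained.
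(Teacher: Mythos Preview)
Your overall architecture (contour deformation, residue at $\lambda=0$ giving $w_\infty$) matches the paper, but there is a genuine gap at the step you flag only in passing: absolute convergence of the shifted integral. The only resolvent bound available is $\|\chi R(\lambda)\chi\|_{L^2\to L^2}\le C|\real\lambda|^{-1}$ (Theorem~\ref{unif resolv est thm}); after bootstrapping this yields at best $\|\chi R(\lambda)\chi\|_{L^2\to H^1}=O(1)$, and correspondingly the matrix resolvent $\chi(G+\lambda)^{-1}\chi$ is merely bounded (Corollary~\ref{matrix op cor}), with no decay in $|\real\lambda|$. Hence the integrand $e^{-i\lambda t}\chi R(\lambda)\chi(w_1-i\lambda w_0)$ is $O(1)$ on $\{\imag\lambda=-c\}$ and the integral does not converge absolutely. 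Integration by parts in $\lambda$ does not rescue this: it requires control of $\partial_\lambda(\chi R(\lambda)\chi)$, which is not provided by the stated estimates, and in any case trades convergence for inverse powers of $t$ rather than exponential decay. The same non-convergence already afflicts your inversion formula on $\{\imag\lambda=a\}$, so the starting representation is formal.

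The paper's proof (following Vodev) circumvents this with three devices you are missing. First, a smooth time cutoff: one studies $W(t)f=\varphi(t)U(t)f$ with $\varphi\in C^\infty$, $\varphi'\in C_c^\infty([1,2])$, so that the Fourier data becomes $(i\varphi' Uf)\check{~}(\lambda)$, which is entire and rapidly decaying in $\real\lambda$ (finite speed of propagation keeps the spatial support compact on $[1,2]$). This makes the contour shift legitimate and isolates the residue at $0$. Second, rather than absolute integrability, one uses Plancherel in $\lambda$ together with the uniform bound on $\chi(G+\lambda)^{-1}\chi$ to obtain an $L^2_t$ bound $\int\|\chi W_1(t)f\|^2\,dt\le C\|f\|^2$ on the shifted piece. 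Third, a Duhamel argument---writing $(\partial_t-iG)\chi W_1=\widetilde W_1$ and integrating $U(t-s)\widetilde W_1(s)$---upgrades the $L^2_t$ bound to the pointwise bound $\|\chi W_1(t)f\|\le C(1+t^{1/2})\|f\|$, which after multiplication by $e^{-\varepsilon t}$ gives the exponential decay. Your residue identification is essentially correct (the paper quotes \eqref{resolv near zero}), but without these three ingredients the remainder estimate does not close.
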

 
 Theorem \ref{LED thm} is motivated by the recent article \cite{agpp22}. There, the authors prove \eqref{LED}, with an explicit constant $c$ depending on $\alpha$ and $\beta$, provided that $\alpha$ and $\beta$ are Lipschitz continuous, bounded from above and below by positive constants, and satisfy \eqref{perturbations}. Our result includes natural examples such as cases where $\alpha$ and $\beta$ are piecewise constant and it is easy to see that the exponential decay rate in \eqref{LED} cannot in general be improved to any superexponential rate. See \cite{biz16} for dispersive and Strichartz estimates for one dimensional wave equations with BV coefficients.
 
 To prove Theorem \ref{LED thm}, it suffices to show \eqref{LED} and \eqref{w infty} in the special case
  \begin{equation} \label{perturbations of identity} 
        \alpha(x) = \beta(x) = 1, \qquad |x| \ge R_0.
  \end{equation}
  Indeed, if $w(x,t)$ solves \eqref{wave eqn} for initial conditions $w_0, \, w_1$ and general $\alpha$ and $\beta$, then the function  $u(x,t) \defeq w(\sqrt{\alpha_0}) x, \sqrt{\beta_0}t)$ solves $ (\beta(\sqrt{\alpha_0}x)/\beta_0)\partial_t^2 u - \partial_x ((\alpha(\sqrt{\alpha_0}x)/\alpha_0) \partial_x u) = 0$ with initial conditions $u(x,0) = w_0(\sqrt{\alpha_0}x), \, \partial_t u(x,0) = \sqrt{\beta}_0 w_1(\sqrt{\alpha_0}x)$. Then \eqref{perturbations of identity} applies, giving that $u$ decays according to \eqref{LED} and \eqref{w infty}. The asserted decay for $w$ follows by a change of variables.
  
  For the wave equation with \textit{constant} coefficients and compactly supported initial conditions, it follows readily from D'Alembert's formula that solution to \eqref{wave eqn} converges to $w_\infty$ in finite time. However, for variable coefficients, exponential decay is a typical scenario. This occurs in the setting of reflection and transmission, e.g., when $\alpha \equiv 1$ and $\beta$ assumes precisely two values.
  
 In dimensions two and higher, the recent works \cite{chik20, sh18} treat local energy decay for wave equations with Lipschitz coefficients. Though in higher dimensions, logarithmic, rather than exponential decay, is optimal in general. The study of energy decay more broadly has a long history, going back to the foundational work of Morawetz, Lax--Phillips, and Vainberg \cite{mor, lmp, lp, vai}, which we will not attempt to review here. The reader may consult \cite{bu98, hizw17,sh18,dz} for more historical background and references.

 We prove Theorem \ref{LED thm} by analyzing $H$ as a \textit{black box Hamiltonian} in the sense of Sj\"ostrand and Zworski \cite{sz91}. In particular, \eqref{perturbations of identity} implies that for any $\chi \in C_0^\infty(\mathbb R; [0,1])$ that is identically one near $[-R_0, R_0]$, the cutoff resolvent
\begin{equation} \label{continued resolv}
    \chi R(\lambda) \chi \defeq \chi (H- \lambda^2)^{-1} \chi : \mathcal{H} \to \mathcal{D}(H)
\end{equation}
continues meromorphically from $\imag \lambda > 0$ to the complex plane. (Here, we equip $\mathcal{D}(H)$ with the graph norm $u \mapsto (\|u\|^2_{\mathcal{H}} + \|Hu\|^2_{\mathcal{H}})^{1/2}$.) In particular, we establish the following high frequency bound.

\begin{theorem} \label{unif resolv est thm}
 Suppose $\alpha, \beta : \R \to (0, \infty)$ have bounded variation and obey \eqref{infs positive} and \eqref{perturbations of identity}. For any $\chi \in C_0^\infty(\mathbb R; [0,1])$ that is identically one near $[-R_0, R_0]$, there exists $C, \, \lambda_0, \, \ep_0 > 0$ so that
 \begin{equation} \label{unif resolv est}
      \|\chi R(\lambda) \chi\|_{\mathcal{H} \to \mathcal{H}} \le C|\real \lambda |^{-1}, 
 \end{equation}
 whenever $|\real \lambda| \ge \lambda_0$, and $|\imag \lambda| \le \ep_0$.
\end{theorem}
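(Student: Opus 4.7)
The plan is to reduce the second-order equation $(H-\lambda^2)u = f$ on $\R$, with $u$ outgoing, to a first-order $2\times 2$ system. Setting $v = \alpha\partial_x u$ and $W = (u,\, v/(i\lambda))^\top$, the system reads
\begin{equation*}
\partial_x W = i\lambda M(x) W + G, \qquad M(x) = \begin{pmatrix} 0 & 1/\alpha(x) \\ \beta(x) & 0 \end{pmatrix}, \qquad G = \begin{pmatrix} 0 \\ if/\lambda \end{pmatrix}.
\end{equation*}
The matrix $M$ is BV on $\R$, equals $\begin{pmatrix}0&1\\1&0\end{pmatrix}$ for $|x|\ge R_0$ by \eqref{perturbations of identity}, and has eigenvalues $\pm c(x) = \pm\sqrt{\beta(x)/\alpha(x)}$ with $c\equiv 1$ outside $[-R_0,R_0]$ and $c\ge c_0>0$ everywhere. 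The outgoing condition, inherited from the meromorphic continuation of $R(\lambda)$, forces $W$ to be a pure plane wave for $|x|\ge R_0$, giving one scalar boundary condition at each of $x=\pm R_0$.

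I would then diagonalize $M = PDP^{-1}$ pointwise, with $D = \mathrm{diag}(c,-c)$ and $P$ chosen BV and uniformly invertible (possible since $c\ge c_0>0$ and $\alpha,\beta$ are BV). The substitution $\widetilde W = P^{-1} W$ converts the system to
\begin{equation*}
d\widetilde W(y) = i\lambda D(y)\, \widetilde W(y)\, dy + dQ(y)\, \widetilde W(y) + P^{-1}(y) G(y)\, dy,
\end{equation*}
where $dQ = -P^{-1}\, dP$ is a matrix-valued finite signed measure whose total variation is controlled by $\|\alpha\|_{BV} + \|\beta\|_{BV}$. The diagonal part has explicit propagator $E(x,y) = \mathrm{diag}(e^{i\lambda\Phi(x,y)}, e^{-i\lambda\Phi(x,y)})$ with $\Phi(x,y) = \int_y^x c$ Lipschitz and strictly increasing in $x$. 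Variation of constants, together with the two outgoing scalar conditions at $x = \pm R_0$ (which select the decaying channel in each direction), expresses $\widetilde W$ on $[-R_0, R_0]$ as the solution of a coupled integral equation of the second kind.

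The source term $P^{-1} G$ immediately contributes $O(|\lambda|^{-1}\|f\|)$ in $L^\infty$. The delicate term is the coupling $\int E(x,y)\, dQ(y)\, \widetilde W(y)$: the off-diagonal entries of $E(x,\cdot)\, dQ$ carry the non-stationary oscillatory factor $e^{\pm 2 i\lambda\Phi(x,y)}$, while the diagonal entries are non-oscillatory. A Stieltjes integration by parts on the oscillatory piece, exploiting $|\partial_y\Phi| = c \ge c_0 > 0$, extracts a factor $|\lambda|^{-1}$: the resulting boundary terms and derivative-of-antiderivative-of-$Q$ terms are $O(|\lambda|^{-1})$ times the total variation of $Q$ and the sup-norm of $\widetilde W$. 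The non-oscillatory diagonal pieces are absorbed via a Gr\"onwall iteration. Combining these yields $\|\widetilde W\|_{L^\infty([-R_0,R_0])} \le C|\lambda|^{-1}\|f\|$, and unwinding the transformations proves \eqref{unif resolv est}.

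The main obstacle is that $dP$ typically carries a nontrivial jump part (BV functions need not be continuous), so the Stieltjes integration by parts must be carried out as a genuine Riemann--Stieltjes identity, with explicit contributions at each jump that must be summed; absolute summability of these contributions is precisely the BV bound. The hypothesis $|\imag\lambda|\le\ep_0$ is used to keep $|e^{i\lambda\Phi}|$ bounded on $[-R_0, R_0]$, which lets the fixed-point argument for the integral equation converge once $\lambda_0$ is chosen large relative to $\|\alpha\|_{BV} + \|\beta\|_{BV}$.
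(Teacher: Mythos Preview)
Your approach via the first–order $2\times 2$ system is natural and genuinely different from the paper's route, which rescales $H-\lambda^2$ semiclassically to a Schr\"odinger operator $-h\partial_x(\alpha h\partial_x)+V_\beta-E$ with $h=|\real\lambda|^{-1}$ and proves a Carleman/positive-commutator estimate for it, using a weight $w=e^{q_1+q_2}$ whose jumps are calibrated to the jumps of $\alpha$ and $V_\beta=1-\beta$. However, your argument has a real gap at the step where you claim that Stieltjes integration by parts on the off-diagonal coupling extracts a factor $|\lambda|^{-1}$.

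The difficulty is this: for a Lipschitz phase $\psi$ with $\psi'\ge c_0>0$ and a \emph{general finite measure} $d\mu$ on $[-R_0,R_0]$, the integral $\int e^{i\lambda\psi(y)}\,d\mu(y)$ need not decay as $|\lambda|\to\infty$. If $d\mu$ has an atom at $y_0$, the contribution $\mu(\{y_0\})\,e^{i\lambda\psi(y_0)}$ has modulus $|\mu(\{y_0\})|$ for every $\lambda$. Nonstationary-phase/van der Corput bounds yield $|\lambda|^{-1}$ for integrals of the form $\int e^{i\lambda\psi}g\,dy$ with $g\in BV$, but not for integrals against a measure. In your setup the coupling is exactly of the latter kind: $dQ=-P^{-1}\,dP$ carries point masses at every jump of $\alpha$ or $\beta$, which is precisely the new case the theorem is meant to cover. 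If one writes out the integration by parts with $u=e^{i\lambda\Phi(x,\cdot)}$ and $v(y)=\int_{-R_0}^{y}\widetilde W_2\,dQ_{12}$, the boundary term is $O(\|dQ\|\,\|\widetilde W\|_\infty)$ with no decay, and the remaining bulk term \emph{loses} a factor $|\lambda|$ rather than gaining one.

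Without that gain, your fixed-point/Gr\"onwall step closes only when the total variation of $dQ$ (equivalently the BV seminorm of $\alpha,\beta$) is small: the forward/backward coupling of $\widetilde W_1$ and $\widetilde W_2$ destroys the Volterra simplex structure, so the Neumann series has no $1/n!$ damping and is dominated only by a geometric series in $\|dQ\|$. The ODE route can be rescued---for instance by bounding the transfer matrix and proving a uniform-in-$\lambda$ lower bound on the relevant Wronskian---but that lower bound is itself the statement that there are no resonances on (or near) the real axis, and it requires an independent argument of the type the paper supplies. As written, the proposal does not establish \eqref{unif resolv est} for general BV coefficients.
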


In Section \ref{high energy bound section}, we achieve \eqref{unif resolv est} by rescaling $H - \lambda^2$ semiclassically, see \eqref{A}, and apply a resolvent estimate for a Schr\"odinger operator with a BV potential, namely Theorem \ref{nontrap thm oned} in Section \ref{wtd resolv est section}. The proof of Theorem \ref{nontrap thm oned} uses a positive commutator argument that relies on some basic calculus facts for BV functions. We collect these facts in Section \ref{bv review section}, and prove them in Appendix \ref{BV appendix}. Finally, in Section \ref{wave decay section}, we prove \eqref{LED} by combining \eqref{unif resolv est} with an argument involving Plancherel's theorem and contour deformation. A similar strategy appears in \cite[Section 3]{vo99}.

Our methods should apply directly to some more general operators, such as the wave operator $\beta(x) \partial_t^2 - \partial_x (\alpha(x) \partial_x) + V(x)$, where $V$ is real-valued, compactly supported, and has BV. In that case, however, the residual $w_\infty$ in \eqref{LED} may be more complicated, as there may or may not be a resonance at zero, and there may also be discrete negative spectrum. See \cite[Theorem 2.9]{dz} for instance, which treats the case $V \not\equiv 0$ and $\alpha, \beta \equiv 1$.


\section{Review of BV} \label{bv review section}

To keep the notation concise, for the rest of the article, we use ``prime" notation to denote differentiation with respect to $x$, e.g., $u' \defeq \partial_x u$.

Let $f : \R \to \C$ be a function of locally bounded variation. For all $x \in \R$, put 
\begin{equation} \label{LRA}
 f^L(x) \defeq \lim_{\delta \to 0^+}f(x-\delta), \qquad  f^R(x) \defeq \lim_{\delta \to 0^+}f(x+\delta), \qquad f^A(x) \defeq (f^L(x) + f^R(x))/2,
\end{equation}
where the limits exist because both the real and imaginary parts of $f$ are a difference of two increasing functions. Recall that $f$ is differentiable Lebesgue almost everywhere, so $f (x)= f^L(x) = f^R(x) = f^A(x)$ for almost all $x \in \R$.

We may decompose $f$ as 
\begin{equation} \label{decompose f}
f = f_{r, +} - f_{r, -} + i( f_{i,+} - f_{i,-}),
\end{equation}
where the $f_{\sigma,\pm}$, $\sigma \in \{r, i\}$, are increasing functions on $\R$. Each $f^R_{\sigma,\pm}$ uniquely determines a regular Borel measure $\mu_{\sigma,\pm}$ on $\R$ satisfying $\mu_{\sigma, \pm}(x_1, x_2] = f^R_{\sigma, \pm}(x_2) -  f^R_{\sigma, \pm}(x_1)$, see \cite[Theorem 1.16]{fo}. We put
\begin{equation} \label{df}
    df \defeq \mu_{r, +} - \mu_{r, -} + i( \mu_{i,+} - \mu_{i,-}),
\end{equation}
 which is a complex measure when restricted to any bounded Borel subset. For any $a  < b$,
 
 \begin{equation} \label{ftc}
 \begin{gathered}
  \int_{(a,b]}df = f^R(b) - f^R(a),\\
  \int_{(a,b)}df = f^L(b) - f^R(a).
  \end{gathered}
 \end{equation}

We collect several properties of functions of bounded variation, which are well known, and which we use to prove Theorem \ref{nontrap thm oned} in Section \ref{wtd resolv est section}. Their proofs are deferred to the appendix.

\begin{proposition}[integration by parts] \label{ibp bv prop}
Let $f: \R \to \C$ have locally bounded variation. For any $a < b$, and any continuous $\varphi$, with $\varphi'$ piecewise continuous and $\varphi(a) = \varphi(b) = 0$, 
\begin{equation} \label{Folland IBP}
 \int_{(a,b]} \varphi df = -\int_{(a,b]} \varphi' fdx.
\end{equation}

\end{proposition}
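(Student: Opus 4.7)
The plan is to apply Fubini's theorem after rewriting $\varphi$ as an antiderivative of $\varphi'$, and then to evaluate the resulting inner integral explicitly using the formulas \eqref{ftc}. Since $\varphi$ is continuous on $[a,b]$ with $\varphi'$ piecewise continuous (in particular bounded and Riemann integrable), $\varphi$ is absolutely continuous on $[a,b]$, so the fundamental theorem of calculus together with $\varphi(a)=0$ gives $\varphi(x) = \int_a^x \varphi'(y)\,dy$ for $a \le x \le b$.

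Substituting this representation into the left-hand side of \eqref{Folland IBP} yields
\begin{equation*}
\int_{(a,b]}\varphi(x)\,df(x) = \int_{(a,b]}\int_{(a,b]} \mathbf{1}_{\{y \le x\}}\,\varphi'(y)\,dy\,df(x).
\end{equation*}
Since $df$ restricted to $(a,b]$ is a complex (hence finite) measure by the discussion following \eqref{df}, and $\varphi'$ is bounded, Fubini's theorem applies after splitting $df$ into its four positive components from \eqref{decompose f}--\eqref{df}; interchanging the order of integration gives
\begin{equation*}
\int_{(a,b]}\varphi\,df = \int_{(a,b]}\varphi'(y)\left(\int_{[y,b]}df(x)\right)dy.
\end{equation*}
By the two identities in \eqref{ftc}, the inner integral equals $\int_{(a,b]}df - \int_{(a,y)}df = f^R(b) - f^L(y)$. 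Substituting this and noting that $\int_a^b \varphi'\,dy = \varphi(b)-\varphi(a)=0$ eliminates the $f^R(b)$ contribution, so the right-hand side reduces to $-\int_{(a,b]} \varphi'\, f^L\,dx$. Because $f$ has at most countably many discontinuities, $f^L = f$ Lebesgue-almost-everywhere, and replacing $f^L$ by $f$ inside this Lebesgue integral gives \eqref{Folland IBP}.

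The two points to verify carefully are the absolute continuity of $\varphi$ (routine, given continuity plus a piecewise continuous derivative) and the application of Fubini to the complex measure $df$ (standard, once one reduces to the four finite positive components of \eqref{df}). I do not anticipate a genuinely hard step: the proposition is essentially the Lebesgue--Stieltjes integration-by-parts formula packaged in the form convenient for the positive commutator computation used later to prove Theorem \ref{nontrap thm oned}.
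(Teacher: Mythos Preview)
Your proof is correct and is essentially the same argument as the paper's: both apply Fubini's theorem to the product measure $df(x)\times \varphi'(y)\,dy$ over a triangular region in $(a,b]^2$, then use \eqref{ftc} and the fact that $f^L=f^R=f$ Lebesgue almost everywhere to identify the result. The paper presents this by evaluating $\iint_{\{a<x\le y\le b\}}\,df(x)\,d\varphi(y)$ in both orders and equating them, whereas you substitute $\varphi(x)=\int_a^x\varphi'$ into the left side and compute directly; the underlying computation is the same.
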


\begin{proposition}[product rule] \label{prod rule bv prop}
Let $f, \, g : \R \to \C$ be functions of locally bounded variation. Then
\begin{equation}\label{e:prod}
 d(fg) = f^A dg + g^A df
\end{equation}
as measures on a bounded Borel subset of $\R$.
\end{proposition}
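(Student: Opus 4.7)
The plan is to verify \eqref{e:prod} by checking that both measures assign the same mass to every half-open interval $(a,b]$. Using the first identity in \eqref{ftc} together with $(fg)^R = f^R g^R$, this reduces to proving
\begin{equation} \label{prod rule reduced}
f^R(b)\, g^R(b) - f^R(a)\, g^R(a) \;=\; \int_{(a,b]} f^A\, dg \;+\; \int_{(a,b]} g^A\, df,
\end{equation}
and by bilinearity (note $(f_1 + if_2)^A = f_1^A + if_2^A$) it suffices to treat real-valued $f$ and $g$.

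To prove \eqref{prod rule reduced} I would apply Fubini's theorem to the signed product measure $df \otimes dg$ on the square $Q = (a,b]^2$. Its total mass factors as $(f^R(b) - f^R(a))(g^R(b) - g^R(a))$, and I would compute that quantity in a second way by decomposing $Q$ into the strict upper triangle $T_+ = \{(s,t) \in Q : s < t\}$, the strict lower triangle $T_- = \{(s,t) \in Q : s > t\}$, and the diagonal $D = \{(s,s) : s \in (a,b]\}$. Using the second formula of \eqref{ftc}, Fubini gives
\begin{align*}
(df \otimes dg)(T_+) &= \int_{(a,b]} \bigl(f^L(t) - f^R(a)\bigr)\, dg(t), \\
(df \otimes dg)(T_-) &= \int_{(a,b]} \bigl(g^L(s) - g^R(a)\bigr)\, df(s).
\end{align*}
For the diagonal, the continuous parts of $df$ and $dg$ give zero mass to any line, so only common atoms contribute and $(df \otimes dg)(D) = \sum_{s \in (a,b]} \Delta f(s)\, \Delta g(s)$, where $\Delta f \defeq f^R - f^L$ (and similarly for $g$).

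To convert to the symmetric form, I would use that $\Delta f$ vanishes off a countable set to rewrite the diagonal sum as $\int_{(a,b]} \Delta f\, dg$, and substitute $f^L = f^A - \tfrac{1}{2}\Delta f$, $g^L = g^A - \tfrac{1}{2}\Delta g$ into the triangle integrals. The half-jump pieces combine with the diagonal sum and cancel exactly, while the remaining boundary terms involving $f^R(a)$ and $g^R(a)$ recombine algebraically into $f^R(b)g^R(b) - f^R(a)g^R(a)$, which yields \eqref{prod rule reduced}. The main delicate point is the bookkeeping at the diagonal: the symmetric averages $f^A$ and $g^A$ on the right-hand side are precisely what is needed to split the joint-atom mass $\Delta f(s)\Delta g(s)$ evenly between the two integrals, and this is the reason the naive formula $d(fg) = f\, dg + g\, df$ fails whenever $f$ and $g$ share a jump.
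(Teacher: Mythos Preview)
Your argument is correct and is a genuinely different proof from the one in the paper. The paper proceeds by mollification: it convolves $f$ and $g$ with an even bump function $\psi$, uses the ordinary product rule for the smooth approximants $f_\varepsilon$, $g_\eta$, and then passes to the limit term by term, invoking the key fact \eqref{e:limconv} that even mollifiers send $f_\varepsilon \to f^A$ pointwise. Your approach instead works directly with the Lebesgue--Stieltjes measures, applying Fubini to $df \otimes dg$ on the square $(a,b]^2$ and keeping careful track of the diagonal contribution.

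Your route is arguably more elementary: it avoids approximation and dominated-convergence bookkeeping, and it makes transparent exactly why the symmetric average $f^A$ is forced---the diagonal mass $\sum \Delta f(s)\Delta g(s)$ must be split evenly. The paper's mollification argument, on the other hand, is closer in spirit to distribution theory and to the higher-dimensional BV literature (cf.\ \cite{vh,afp}), where a Fubini-on-a-square computation has no obvious analogue. One small point you leave implicit: checking \eqref{prod rule reduced} on all half-open intervals determines the complex measure on every bounded Borel set by the $\pi$--$\lambda$ theorem (applied after splitting into real and imaginary, then positive and negative, parts); it would be worth saying this explicitly.
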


\noindent \textbf{Remark:} We note that if $f$ is continuous, then inductively applying \eqref{e:prod} yields $df^n = nf^{n-1} df$.

\begin{proposition}[chain rules] \label{chain rule bv prop}
Let $f : \R \to \R$ be continuous and have locally bounded variation. Then, as measures on a bounded Borel set of $\R$,
\begin{equation} \label{chain rule continuous}
    d(e^f) = e^{f} df.
\end{equation}
On the other hand, let $x_1, \dots x_N, r_0, r_1  \dots, r_N \in \R$, and consider the function 
\begin{equation*}
    g(x) = r_0 \mathbf{1}_{(-\infty, x_1]} + \sum_{j=1}^{N-1} r_j \mathbf{1}_{(x_j, x_{j+1}]} + r_N \mathbf{1}_{(x_N, \infty)}.
\end{equation*}
Then 
\begin{equation} \label{chain rule jumps}
   d(e^{g}) = \sum_{j=1}^N(e^{r_j} - e^{r_{j-1}}) \delta_{x_j},
\end{equation}
where $\delta_{x_j}$ denotes the dirac measure at $x_j$.
\end{proposition}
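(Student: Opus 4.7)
The plan is to handle the two parts separately. For the continuous case \eqref{chain rule continuous}, I would approximate $e^f$ by its Taylor polynomials and invoke the remark following Proposition \ref{prod rule bv prop}, which gives $d(f^n) = n f^{n-1} df$ for continuous $f$ of locally bounded variation. For the step function case \eqref{chain rule jumps}, I would compute $d(e^g)$ directly from \eqref{ftc}, since $e^g$ is itself a step function with explicit jumps. The main point requiring care is the limit argument in the first part: checking that the polynomial identity of measures survives the $N \to \infty$ limit, which in turn rests on the finiteness of $|df|$ on bounded intervals.

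For \eqref{chain rule continuous}, fix a bounded interval $(a,b]$ and set $M \defeq \sup_{[a,b]} |f| < \infty$, possible by continuity of $f$. Let $S_N(y) \defeq \sum_{n=0}^N y^n / n!$, so $S_N'(y) = \sum_{n=1}^N y^{n-1}/(n-1)!$. By the remark after Proposition \ref{prod rule bv prop}, $d(f^n) = n f^{n-1} df$ for each $n \ge 1$. Since $d$ is linear in its argument (by \eqref{ftc} applied to sums), this gives $d(S_N(f)) = S_N'(f)\, df$ on $(a,b]$, and then \eqref{ftc} together with continuity of $S_N(f)$ yields
\[ \int_{(a,b]} S_N'(f)\, df = S_N(f(b)) - S_N(f(a)). \]
Since $S_N$ and $S_N'$ converge uniformly to $y \mapsto e^y$ on $[-M, M]$, and $|df|$ is a finite measure on $(a,b]$ (because the increasing parts in \eqref{decompose f} are bounded on $[a,b]$), dominated convergence lets us pass to the limit:
\[ \int_{(a,b]} e^f \, df = e^{f(b)} - e^{f(a)} = (e^f)^R(b) - (e^f)^R(a) = \int_{(a,b]} d(e^f), \]
where the second equality uses continuity of $e^f$. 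Since $(a,b]$ is arbitrary and both sides are regular Borel measures on bounded sets agreeing on half-open intervals, they agree as measures.

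For \eqref{chain rule jumps}, observe that $e^g$ is itself a step function of the same form as $g$, with $r_j$ replaced by $e^{r_j}$. Using \eqref{ftc} together with the fact that $(e^g)^R$ jumps from $e^{r_{j-1}}$ to $e^{r_j}$ at $x_j$ and is locally constant elsewhere, one computes directly that for any $a < b$,
\[ d(e^g)((a,b]) = (e^g)^R(b) - (e^g)^R(a) = \sum_{j \,:\, x_j \in (a,b]} (e^{r_j} - e^{r_{j-1}}), \]
which equals the right-hand side of \eqref{chain rule jumps} evaluated on $(a,b]$. Equality of the two measures follows as before.
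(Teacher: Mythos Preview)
Your proof is correct and takes essentially the same route as the paper: Taylor polynomials together with $d(f^n)=nf^{n-1}\,df$ for the continuous case, and a direct computation for the step function; the only cosmetic difference is that you verify equality of the two measures on half-open intervals via \eqref{ftc}, whereas the paper pairs both sides against test functions $\varphi\in C_0^\infty$ via \eqref{Folland IBP}. One small point you leave implicit (and the paper states explicitly) is that $e^f$ itself has locally bounded variation, so that $d(e^f)$ is defined; this follows from the elementary bound $|e^a-e^b|\le e^M|a-b|$ for $a,b\in[-M,M]$.
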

The need to treat separately the case of jump discontinuities in Proposition \ref{chain rule bv prop} was brought to the authors' attention by \cite{pi22, re22a}.

\section{Weighted resolvent estimate} \label{wtd resolv est section}
The purpose of this Section is to prove a weighted resolvent estimate for the semiclassical Schr\"odinger operator
\begin{equation} \label{defn P}
 P = P(h) \defeq -h \partial_x (\alpha(x) h \partial_x ) + V(x) - E : L^2(\R) \to L^2(\R), \qquad E, \, h > 0, 
\end{equation}
which is the key ingredient in the proof of Theorem \ref{unif resolv est thm} in Section \ref{high energy bound section}. We suppose $\alpha$ and $V$ are real-valued functions of bounded variation on $\R$, and
\begin{equation} \label{inf alpha positive}
\inf_{\R} \alpha > 0.
\end{equation}
Specifically, we show
\begin{lemma} \label{self adjointness lemma}
The operator $P : L^2(\R) \to L^2(\R)$ is self adjoint with respect to the domain 
\begin{equation} \label{D}
    \D \defeq \{u \in L^2(\R) : u, u' \in L^2(\R) \cap L^\infty(\R), \text{ and } Pu \in L^2(\R) \}, 
\end{equation}
\end{lemma}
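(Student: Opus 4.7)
The plan is to prove self-adjointness via the quadratic form approach. I would introduce the sesquilinear form
\begin{equation*}
q(u,v) \defeq h^2 \int_\R \alpha(x) \overline{u'(x)} v'(x) \, dx + \int_\R (V(x) - E) \overline{u(x)} v(x) \, dx
\end{equation*}
with form domain $H^1(\R)$. Since BV functions on $\R$ are bounded, and $\alpha$, $V$ are real-valued, $q$ is symmetric and continuous on $H^1(\R) \times H^1(\R)$. For coercivity, using $\inf_\R \alpha > 0$, I would choose $M > E + \|V\|_\infty + 1$ to obtain
\begin{equation*}
q(u,u) + M \|u\|_{L^2}^2 \geq h^2 (\inf_\R \alpha) \|u'\|_{L^2}^2 + \|u\|_{L^2}^2,
\end{equation*}
showing that $q$ is bounded below and that the form norm $\sqrt{q(u,u)+M\|u\|^2}$ is equivalent to the usual $H^1$ norm. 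Completeness of $H^1(\R)$ then yields closedness of $q$.

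Next I would invoke the standard representation theorem for closed, semibounded, symmetric sesquilinear forms to produce a unique self-adjoint operator $T$ on $L^2(\R)$ associated with $q$, satisfying $\langle T u, v \rangle = q(u,v)$ for $u \in \mathcal{D}(T)$, $v \in H^1(\R)$, with
\begin{equation*}
\mathcal{D}(T) = \{ u \in H^1(\R) : v \mapsto q(u,v) \text{ is } L^2\text{-continuous}\}.
\end{equation*}
Testing the $L^2$-continuity condition against $v \in C_c^\infty(\R)$ identifies $\mathcal{D}(T)$ with $\{u \in H^1(\R) : (\alpha u')' \in L^2(\R) \text{ distributionally}\}$, and for such $u$, $Tu = -h^2 (\alpha u')' + (V-E) u$.

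The crux is then to identify $\mathcal{D}(T)$ with $\D$ (with $T=P$ on this common domain). For $\D \subset \mathcal{D}(T)$: given $u \in \D$, $\alpha u' \in L^2(\R)$ has distributional derivative $(\alpha u')' = h^{-2}(-Pu + (V-E)u) \in L^2(\R)$, so $\alpha u' \in H^1(\R) \hookrightarrow C_0(\R)$; combined with $v \in H^1(\R) \hookrightarrow C_0(\R)$, the boundary terms in the classical integration by parts vanish, yielding $\langle P u, v \rangle = q(u,v)$. The main obstacle I expect is the reverse inclusion $\mathcal{D}(T) \subset \D$: for $u \in \mathcal{D}(T)$, one-dimensional Sobolev embedding gives $u \in H^1(\R) \subset L^\infty(\R)$; then $\alpha u' \in L^2(\R)$ with distributional derivative in $L^2(\R)$ puts $\alpha u' \in H^1(\R) \subset L^\infty(\R)$; finally, the two-sided bound $0 < \inf_\R \alpha \leq \alpha \leq \sup_\R \alpha < \infty$ delivers $u' = (\alpha u')/\alpha \in L^\infty(\R)$, so $u \in \D$. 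The conceptual point is that the apparently strong $L^\infty$ conditions built into $\D$ are automatic once we have $u \in H^1(\R)$ and $(\alpha u')' \in L^2(\R)$, thanks to one-dimensional Sobolev embedding and the uniform positivity of $\alpha$.
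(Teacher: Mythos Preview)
Your argument is correct and complete. The form method works cleanly here: closedness of $q$ on $H^1(\R)$, the representation theorem, and the identification $\mathcal D(T)=\{u\in H^1(\R):(\alpha u')'\in L^2(\R)\}$ are all straightforward, and your final step---using the one-dimensional Sobolev embedding $H^1(\R)\hookrightarrow L^\infty(\R)$ applied to both $u$ and $\alpha u'$, together with $\inf_\R\alpha>0$, to recover $u,u'\in L^\infty(\R)$---is exactly what is needed to match $\mathcal D(T)$ with $\D$.

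The paper takes a different route. Rather than building $P$ from a form, it starts from the Sturm--Liouville maximal domain
\[
\mathcal D_{\max}=\{u\in L^2(\R): u,\ \alpha u'\ \text{locally absolutely continuous},\ Pu\in L^2(\R)\},
\]
for which self-adjointness is read off from standard Sturm--Liouville theory (Zettl). The work then goes into proving $\mathcal D_{\max}=\D$: the authors derive a coupled system of inequalities for $\int_{-a}^a|u'|^2$, $\sup_{[-a,a]}|u|$, and $\sup_{[-a,a]}|u'|$ by integrating by parts on $[-a,a]$, and then play these against each other (with a parameter $\gamma$) to obtain bounds uniform in $a$. Letting $a\to\infty$ yields $u'\in L^2$ and $u,u'\in L^\infty$. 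Your approach replaces this hands-on a~priori estimate by the Sobolev embedding, which is shorter and more conceptual; the paper's approach, on the other hand, avoids invoking the form representation theorem and stays entirely within the classical Sturm--Liouville framework the authors cite elsewhere.
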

\noindent and prove the following resolvent bound, for $h$ small, and uniformly down to $[E_{\min}, E_{\max}] \subseteq (0, \infty)$.

\begin{theorem} \label{nontrap thm oned}
Fix $[E_{\min}, E_{\max}] \subseteq (0, \infty)$ and  $\delta > 0$. Assume $\alpha, V : \R \to \R$ have bounded variation, $\alpha$ obeys \eqref{inf alpha positive}, and
\begin{equation} \label{E grtr V}
   \sup_{\R} V <  E_{\min}.
\end{equation}
Then there exist $C, h_0 > 0$, so that for all $ E \in [E_{\min}, E_{\max}],$ $h \in (0,h_0]$, and $\ep > 0$,
\begin{equation} \label{nontrap est oned}
 \|(|x| + 1)^{-\frac{1+\delta}{2}} (P(h) - i\varepsilon)^{-1} (|x| + 1)^{-\frac{1+\delta}{2}}\|_{L^2(\mathbb R) \to L^2(\mathbb R)} \le Ch^{-1}. 
\end{equation}
\end{theorem}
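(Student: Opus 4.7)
The plan is to establish \eqref{nontrap est oned} via a semiclassical positive-commutator (Morawetz multiplier) argument adapted to BV coefficients. Set $u \defeq (P(h) - i\ep)^{-1} f$ with $f \defeq \langle x \rangle^{-(1+\delta)/2} g$ and $\langle x\rangle \defeq |x| + 1$, so that the target reduces to showing $\|\langle x\rangle^{-(1+\delta)/2} u\|_{L^2} \le C h^{-1} \|g\|_{L^2}$. By Lemma \ref{self adjointness lemma}, the resolvent is well defined and $u \in \D$, so $u, u' \in L^2 \cap L^\infty$; this regularity is crucial since it permits the pointwise-defined quantities $|u|^2$ and $|hu'|^2$ to be integrated against the finite signed measures $d\alpha$ and $dV$ that arise below.

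For the core identity, fix a bounded smooth nondecreasing escape function $\phi$ with $\phi' = c_1 \langle x\rangle^{-1-\delta}$ for a small constant $c_1 > 0$ to be chosen, and introduce the symmetrized multiplier $Mu \defeq \phi u' + \tfrac12 \phi' u$. I would pair $(P - i\ep) u = f$ with $Mu$ in $L^2$, take the real part, and integrate by parts via Proposition \ref{ibp bv prop}. The BV product rule (Proposition \ref{prod rule bv prop}) is then used to reinterpret terms that would formally involve $\alpha'$ or $V'$ as integrals against $d\alpha, dV$. This produces a Morawetz-type identity of the schematic form
\[
\int \phi'\bigl[\alpha |hu'|^2 + (E - V) |u|^2\bigr]\,dx \;=\; 2\,\Re\langle f, Mu\rangle \;-\; \int \phi \bigl[|hu'|^2\,d\alpha + |u|^2\,dV\bigr] \;+\; R(\ep),
\]
where $R(\ep)$ collects the $\ep$-dependent contributions and is controlled via the a priori bound $\ep \|u\| \le \|f\|$ obtained by pairing the equation with $u$ and taking imaginary parts. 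By \eqref{inf alpha positive} and \eqref{E grtr V}, the integrand on the left is pointwise bounded below by $c_0 \phi'(|hu'|^2 + |u|^2)$ for some $c_0 > 0$.

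The main obstacle is the BV-error term $\mathcal{E}_{\text{BV}} \defeq -\int \phi [|hu'|^2\,d\alpha + |u|^2\,dV]$. Since $\alpha$ and $V$ have bounded variation on all of $\R$ (not just locally), the measures $d\alpha$ and $dV$ are globally finite, and
\[
|\mathcal{E}_{\text{BV}}| \;\le\; \|\phi\|_{L^\infty}\bigl(|d\alpha|(\R) + |dV|(\R)\bigr)\bigl(\|u\|_{L^\infty}^2 + \|hu'\|_{L^\infty}^2\bigr).
\]
The $L^\infty$ norms are then converted to weighted $L^2$ norms via the one-dimensional identity $|w(x)|^2 = 2\Re \int_{-\infty}^x w \bar w'$, giving $\|w\|_\infty^2 \le 2 \|w\|_2 \|w'\|_2$, followed by a Cauchy-Schwarz separation of the weight. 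The resolution is to choose $c_1$ small enough that $\|\phi\|_{L^\infty}$ times the total variation constants is dominated by $c_0$; this lets $\mathcal{E}_{\text{BV}}$ be absorbed into the coercive left-hand side. Shrinking $c_1$ only rescales $\phi'$ without changing its weight form, so absorption is compatible with the main coercive lower bound; the reliance on global (rather than merely local) bounded variation is essential here.

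Finally, estimating $\langle f, Mu\rangle$ by Cauchy-Schwarz in the weighted spaces, the factor $h^{-1}$ emerges from the algebraic identity $u' = h^{-1}(hu')$ in $\|\phi u'\|$, giving $|\langle f, Mu\rangle| \lesssim h^{-1} \|g\| \, \|\langle x\rangle^{-(1+\delta)/2}(u, hu')\|_{L^2}$. Young's inequality absorbs one factor of the weighted norm on the right into the coercive left-hand side, and the resulting inequality
\[
\|\langle x\rangle^{-(1+\delta)/2}(u, hu')\|_{L^2}^2 \;\le\; C h^{-2} \|g\|_{L^2}^2
\]
yields \eqref{nontrap est oned}. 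The most delicate point throughout is the balancing of $c_1$: it must be small enough to defeat the BV error yet large enough to preserve a useful coercivity constant.
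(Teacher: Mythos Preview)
Your proposal has a genuine gap in the absorption of the BV error term $\mathcal{E}_{\mathrm{BV}}$. You bound $|\mathcal{E}_{\mathrm{BV}}|$ by $\|\phi\|_{L^\infty}$ times the total variations times $\|u\|_{L^\infty}^2 + \|hu'\|_{L^\infty}^2$, and then invoke $\|w\|_{L^\infty}^2 \le 2\|w\|_{L^2}\|w'\|_{L^2}$; but this produces \emph{unweighted} $L^2$ norms of $u$ and $u'$. The coercive left-hand side only controls the \emph{weighted} quantities $\|\langle x\rangle^{-(1+\delta)/2}u\|_{L^2}$ and $\|\langle x\rangle^{-(1+\delta)/2}hu'\|_{L^2}$, and there is no way to dominate $\|u\|_{L^2}$ or $\|u'\|_{L^2}$ by these, since the weight decays. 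Any ``Cauchy--Schwarz separation of the weight'' places a growing factor $\langle x\rangle^{(1+\delta)/2}$ on one of the norms, which you cannot control either. Moreover, the $c_1$-balancing is illusory: since $\phi' = c_1\langle x\rangle^{-1-\delta}$ and $\|\phi\|_{L^\infty} = c_1\int\langle x\rangle^{-1-\delta}\,dx$, both the coercive term $c_0\int\phi'(|hu'|^2+|u|^2)\,dx$ and your bound on $\mathcal{E}_{\mathrm{BV}}$ scale \emph{linearly} in $c_1$, so shrinking $c_1$ does not improve their ratio. (Even if one exploits compact support of $d\alpha,dV$---which the theorem does not assume---localized Sobolev embedding introduces a factor $h^{-1}$ or $h^{-2}$ that again prevents absorption for small $h$.)

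The paper avoids this entirely by building the BV data into the weight rather than treating it as an error. Instead of a fixed $\phi$ with $\phi'\sim\langle x\rangle^{-1-\delta}$, it takes $w = e^{q_{1,N}+q_2}$, where $q_{1,N}$ is a step function whose jumps are calibrated to the positive jumps of $V$ and $\alpha$, and $q_2(x) = \int_{-\infty}^x \bigl(k\,dV^c_+ + \tfrac{2}{\inf\alpha}\,d\alpha^c_+ + \langle x'\rangle^{-1-\delta}dx'\bigr)$. With this choice the product and chain rules for BV functions yield the \emph{measure} inequalities $d(w(E-V)) \ge w^A(E_{\min}-V)^A\langle x\rangle^{-1-\delta}$ and $dw - (\alpha^A)^{-1}w^A\,d\alpha \ge w^A\langle x\rangle^{-1-\delta}$, up to a remainder on finitely many omitted jump points that vanishes as $N\to\infty$. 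Thus the $d\alpha$ and $dV$ contributions are absorbed pointwise, before one ever integrates against $|u|^2$ or $|hu'|^2$, and no control of $\|u\|_{L^\infty}$ or $\|u\|_{L^2}$ is required.
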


Since $V$ has limited regularity, we have replaced a more typical nontrapping condition, concerning the escape of trajectories $\dot{x} = 2\xi$, $\dot{\xi} = -\partial_x V$ that obey $|\xi|^2 + V(x) = E$, with the simpler condition \eqref{E grtr V}. Indeed, as $\alpha$ and $V$ have only bounded variation, the bicharacteristic flow is not necessarily well defined. Moreover, in Section \ref{high energy bound section}, we shall see that \eqref{E grtr V} is a natural assumption, given that the coefficients of the operator $H$ obey \eqref{infs positive}.


To prove Theorem \ref{nontrap thm oned}, we employ a positive commutator-style argument in the context of the spherical energy method. This strategy has long been used to prove semiclassical resolvent estimates \cite{cv, da, kv, dash20, gash22}. In fact, as we are in one dimension, we just use the pointwise energy
\begin{equation} \label{defn F}
    F(x) = F[u](x) \defeq \alpha(x)|h \partial_x u(x)|^2 +  (E- V(x))|u(x)|^2, \qquad  u \in \mathcal{D}.
\end{equation}

The goal is to construct a suitable weight function $w(x)$ so that the derivative of $wF$, in the sense of distributions, has a favorable sign. From \eqref{deriv wF with alpha} below, we see that $w$ ought to be designed so that $(w(E - V))'$ has a positive lower bound. If $V$ only has bounded variation, this derivative must be interpreted as a measure, and extra care is needed to control the point masses arising from the discontinuities of $V$ (see \eqref{lower bd first meas}).

We first give our attention to Lemma \ref{self adjointness lemma}, which is essentially well known. Our present proof is adapted from \cite[Section 2]{dash20}.

\begin{proof}[Proof of Lemma \ref{self adjointness lemma}]
Let
\begin{equation*}
  \mathcal D_{\max} \defeq \{u \in L^2(\R) : u, \, \alpha u' \text{ are locally absolutely continuous and } Pu \in L^2(\R)\},  
\end{equation*}
By, \cite[Lemma 10.3.1]{ze}, $\mathcal D_{\max}$ is dense in $L^2(\R)$. We begin by proving
\begin{equation} \label{Dmax is D}
\mathcal D_\textrm{max} = \mathcal D.
\end{equation}

Indeed, for any $a>0$ and $u \in \mathcal D_\textrm{max}$, by integration by parts and Cauchy--Schwarz,
\begin{equation*}
\begin{gathered}
\inf \alpha \int_{-a}^a|u'|^2 \le 
\int_{-a}^a \alpha u'\bar u' =  \alpha u' \overline{u} \rvert_{-a}^a  + h^{-2} \int_{-a}^a Pu \bar u - h^{-2} \int_{-a}^a Vu \bar u   \\
  \le  2 \sup \alpha \sup_{[-a,a]}|u'|\sup_{[-a,a]} |u| + h^{-2}\sup|V| \|u\|_{L^2}^2 + h^{-2}\|Pu\|_{L^2}\|u\|_{L^2} ,\\
\sup_{[-a,a]}|u|^2 =\sup_{x \in [-a,a]}  \left(|u(0)|^2 + 2 \real \int_0^x u'\bar u\right)  \le |u(0)|^2 + 2\left(\int_{-a}^a|u'|^2\right)^{1/2}\|u\|_{L^2},\\
(\inf \alpha)^2 \sup_{[-a,a]} |u'|^2 \le \sup_{[-a,a]} |\alpha u'|^2 = \sup_{x \in [-a,a]}  \left( |(\alpha u')(0)|^2 +  2 \real \int_0^x (\alpha u')'\alpha \overline{u}' \right) \\
\le |(\alpha u')(0)|^2 + 2 h^{-2} ( \sup (\alpha |V|)  \|u\|_{L^2} +  \sup \alpha  \|Pu\|_{L^2})\left(\int_{-a}^a|u'|^2\right)^{1/2}. 
\end{gathered}
\end{equation*}
This is a system of inequalities of the form $x^2 \le  A + Byz$, $y^2 \le C + Dx$, $z^2 \le E + Fx$. Thus, for any $\gamma > 0$,
\begin{equation} \label{combine ineq system}
\begin{split}
    x^2 &\le A + \frac{B}{2\gamma} + \gamma (yz)^2 \le A +   \frac{B}{2\gamma} + \gamma (C + Dx)(E+ Fx) \\
    &\le A + \frac{B}{2\gamma} + \gamma CE + \gamma \frac{ (CF)^2 + (DE)^2}{2} + (\gamma^2 + \gamma DF) x^2.
\end{split}
\end{equation}
Choosing $\gamma$ small enough allows one to absorb all the terms involving $x^2$ on the right side of \eqref{combine ineq system}, into the left side. Hence $x$, $y$ and $z$ are all bounded independently of $a$. Letting $a \to \infty$, we conclude that $u' \in L^2(\R)$ and $u, u' \in L^\infty(\R)$. Hence  $\mathcal D_{\max} \subseteq \mathcal D$. The inclusion $\mathcal{D} \subseteq \mathcal D_{\max}$ follows because $Pu \in L^2(\R)$ implies $(\alpha u')' \in L^2(\R)$, which in turns gives that $\alpha u'$ is locally absolutely continuous. 

Equip $P$  with the domain $\mathcal D_\textrm{max} = \mathcal D \subseteq L^2(\R)$. By integration by parts, $P \subseteq P^*$. But, by Sturm--Liouville theory,  $P^* \subseteq P$; see  \cite[Equation 10.3.2]{ze}. Hence $P=P^*$.

\end{proof} 

We now prove Theorem \ref{nontrap thm oned}, with the argument proceeding in two steps. First, as described above, we build a weight $w$ so that, $d(wF)$ has a desirable lower bound in the sense of measures--see \eqref{deriv wF with alpha}. This yields the Carleman estimate \eqref{penult est dim one}, which implies the resolvent estimate \eqref{final estimate one dim}.

\begin{proof}[Proof of Theorem \ref{nontrap thm oned}]
Decompose
\begin{equation*}
\begin{gathered}
dV = dV^d + dV^c, \\
d\alpha = d\alpha^d + d\alpha^c, \\
\end{gathered}
\end{equation*}
into their discrete and continuous parts. Let $J_V$, respectively $J_\alpha$ be the sets of ``positive jumps" of $V$, $\alpha$ respectively. That is $J_V$ is the set of $x$-values such that $(V^R - V^L)(x) > 0$, and similarly for $J_\alpha$. Since $V$ and $\alpha$ have bounded variation, both $J_V$ and $J_\alpha$ are at most countable. We denote by $\{x_j\}_j$ an enumeration of $J_V \cup J_\alpha$. Additionally, let
\begin{equation*}
\begin{gathered}
dV^c = dV^c_+ - dV^c_-,\\
d\alpha^c = d\alpha^c_+ - d\alpha^c_-, \\
\end{gathered}
\end{equation*}
be Jordan decompositions for $dV^c$, $d\alpha^c$ respectively.

For each $N \in \NN$, let $x_{1,N}, x_{2,N}, \dots, x_{N,N}$ be the elements of $\{x_j\}_{j=1}^N$ relabeled in increasing order. Define the function $q_{1,N}$ by 
\begin{equation} 
q_{1,N}(x) \defeq  r_{0,N} \mathbf{1}_{(-\infty, x_{1,N}]} + \sum^{N-1}_{j = 1} r_{j,N} \mathbf{1}_{(x_{j,N}, x_{j+1,N}]} + r_{N,N} \mathbf{1}_{(x_{N,N}, \infty)}, \label{q1} 
\end{equation}
where the numbers $\{r_{j,N}\}_{j=0}^N$ are defined recursively as follows:
\begin{gather}
    r_{0, N} = 0, \qquad r_{j,N} = r_{j-1,N} + \log  \max \Big\{1 + \frac{2A_{j,N}}{1 - A_{j,N}}, 1 + \frac{2B_{j,N}}{1 - B_{j,N}} \Big\}, \label{rj} \\
     A_{j,N} \defeq \frac{(V^R- V^L)(x_{j,N})}{2(E - V)^A(x_{j,N})} \in [0, 1), \qquad B_{j,N} \defeq \frac{(\alpha^R- \alpha^L)(x_{j,N}) }{2\alpha^A(x_{j,N})} \in [0, 1). \label{Aj and Bj}
\end{gather}
 When $N = 1$, we omit the summation from \eqref{q1}. Moreover, if $\{x_j\}_j$ is a finite set, we work only with a single function $q_{1,N_1}$, where $x_1 < \cdots < x_{N_1}$ is the ordering of $J_V \cup J_\alpha$. 

Since $V$ and $\alpha$ have bounded variation,
\begin{equation} \label{apply BV}
\sum_j \max\{ (V^R - V^L)(x_j), (\alpha^R - \alpha^L)(x_j)\} < \infty. 
\end{equation} 
Thus $\max q_{1,N} = r_{N,N}$ is bounded uniformly in $N$, by 
\begin{equation} \label{rNN}
    \begin{split}
    r_{N,N} &= \sum_{j =1}^N r_{j,N} - r_{j-1,N} \\
    &= \sum_{j =1}^N \log \max \Big\{1 + \frac{2A_{j,N}}{1 - A_{j,N}}, 1 + \frac{2B_{j,N}}{1 - B_{j,N}} \Big\}  \\
    &\le \sum_{j=1}^N \max \Big\{\frac{2A_{j,N}}{1 - A_{j,N}},\frac{2B_{j,N}}{1 - B_{j,N}} \Big\} \\
    &\le \sum_{j=1}^N \max \Big\{ \frac{(V^R - V^L)(x_{j,N})}{(E - V)^A(x_{j,N}) - \tfrac{1}{2}(V^R-V^L)(x_{j,N})}, \frac{(\alpha^R - \alpha^L)(x_{j,N})}{\alpha^A(x_{j,N}) - \tfrac{1}{2}(\alpha^R - \alpha^L)(x_{j,N})}  \Big \} < \infty.
    \end{split}
\end{equation}

Next, we put 
\begin{equation} \label{q2}
   q_{2}(x)  \defeq \int_{-\infty}^x \big[ k dV^c_{+}  + \tfrac{2}{\inf \alpha} d\alpha^c_{+}  + (|x'|+1)^{-1-\delta}dx' \big], 
\end{equation}
where $k > 0$ is chosen large enough so that 
\begin{equation} \label{cond k} 
 k \left(E_{\min} - \sup_{\R} V\right) \ge 1.
\end{equation}
To implement the energy method outlined in Section \ref{introduction}, we will in fact use a family of weight functions depending on $N$,
\begin{equation} \label{wN}
w(x) = w_N(x) = e^{q_{1,N}(x) + q_2(x)}, \qquad N \in \NN.
\end{equation}
According to \eqref{chain rule continuous} and \eqref{chain rule jumps},
\begin{equation} \label{dw}
    dw(x) =  \sum_{j=1}^N e^{q_2}  (e^{r_{j,N}} - e^{r_{j-1,N}}) \delta_{x_{j,N}} + w^A(\tfrac{2}{\inf \alpha} d\alpha^c_+ + k dV^c_+ +  (|x| + 1)^{-1 -\delta}).
\end{equation}

We now establish lower bounds on the measures $d(w(E-V))$ and $dw - (\alpha^A)^{-1}w^A d\alpha$, which we need in the estimate \eqref{deriv wF with alpha} below. For $d(w(E-V))$, we have, by \eqref{e:prod}, \eqref{cond k}  and \eqref{dw},
\begin{equation} \label{lower bd first meas}
\begin{split}
    d(w&(E-V)) \\
    &\ge (E-V)^Adw - w^A (dV^d + dV^c_+)  \\
    &\ge \sum_{j =1}^N e^{q_2}  \Big((E-V)^A(e^{r_{j,N}} - e^{r_{j-1,N}}) -(V^R - V^L)(\tfrac{1}{2} e^{r_{j,N}} + \tfrac{1}{2} e^{r_{j-1,N}} )  \Big)\delta_{x_{j,N}} \\
    &- \sum_{x \in J_{V} \setminus \{x_{j,N}\}_{j=1}^N} w^A (V^R - V^L) \delta_{x}\\
    &+w^A (k(E_{\min}-V)^A -1)dV^c_+ + w^A(E-V)^A(|x| + 1)^{-1 -\delta}.
\end{split}
\end{equation}
with the inequalities holding in the sense of measures. As for $dw - (\alpha^A)^{-1}w^A d\alpha$,
\begin{equation} \label{lower bd second meas}
\begin{split}
    dw& - (\alpha^A)^{-1}w^A d\alpha \\
    &\ge dw - (\alpha^A)^{-1}w^A (d\alpha^d + d\alpha^c_+)  \\
    &\ge \sum_{j =1}^N e^{q_2}  \Big((e^{r_{j,N}} - e^{r_{j-1,N}}) -\frac{(\alpha^R - \alpha^L)}{\alpha^A}(\tfrac{1}{2} e^{r_{j,N}} + \tfrac{1}{2} e^{r_{j-1.N}} )  \Big)\delta_{x_{j,N}} \\
    &- \sum_{x \in J_{\alpha} \setminus \{x_{j,N}\}_{j=1}^N} (\alpha^A)^{-1} w^A (\alpha^R - \alpha^L) \delta_{x}\\
    &+ w^A (\tfrac{2}{ \inf \alpha} - \tfrac{1}{\alpha^A} )d\alpha^c_+ + w^A(|x| + 1)^{-1 -\delta}.
\end{split}
\end{equation}
The first term in line five of \eqref{lower bd first meas} is nonnegative by \eqref{cond k}; the first term of line four of \eqref{lower bd second meas} is nonnegative since $\inf \alpha < 2\alpha^A$. Furthermore, the third line of \eqref{lower bd first meas} and the third line of \eqref{lower bd second meas}, are nonnegative by \eqref{rj} and \eqref{Aj and Bj}.

Thus we conclude 
\begin{equation} \label{final lwr bds}
\begin{gathered}
 d(w(E-V)) \ge w^A(E_{\min}-V)^A(|x| + 1)^{-1 -\delta} - \sum_{x \in J_{V} \setminus \{x_{j,N}\}_{j=1}^N} w^A (V^R - V^L) \delta_{x}, \\
 dw - (\alpha^A)^{-1}w^A d\alpha  \ge  w^A(|x| + 1)^{-1 -\delta} - \sum_{x \in J_{\alpha} \setminus \{x_{j,N}\}_{j=1}^N} (\alpha^A)^{-1} w^A (\alpha^R - \alpha^L) \delta_{x},
 \end{gathered}
\end{equation}
which are the lower bounds we shall employ in \eqref{deriv wF with alpha}.

Next, define the pointwise energy 
\begin{equation} \label{defn F with alpha}
    F(x) = F[u](x) \defeq \alpha(x)|hu'(x)|^2 +  (E- V(x))|u(x)|^2, \qquad x \in \R,
\end{equation}
with 
\begin{equation} \label{defn u}
    u = (P(h)- i\varepsilon)^{-1}(|x| + 1)^{-\frac{1+\delta}{2}} f \in \D, \qquad \ep >0,\, f \in L^2(\R).
\end{equation}
By \eqref{D}, $u, \, u' \in L^2(\R) \cap L^\infty(\R),$ and $(\alpha u')' \in L^2(\R)$. Moreover, in the calculations to follow, we work with fixed representatives of $u$ and $u'$, such that both $u$ and $\alpha u'$ are locally absolutely continuous. This is justified by \eqref{Dmax is D}.

From \eqref{e:prod}, we see that $dF$ is given by
\begin{equation*} 
    dF = h^2(\alpha u') d(\overline{u}') + h^2(\overline{u}')^A (\alpha u')' -|u|^2dV + 2(E - V)^A \real\left( u \overline{u}' \right).
\end{equation*}
Using
\begin{equation*}
    (\alpha u')' = (u')^A d\alpha + \alpha^A d(u') \implies  d(u') = \frac{(\alpha u')'}{\alpha^A} -\frac{(u')^A}{\alpha^A}   d\alpha,
\end{equation*}
we arrive at 
\begin{equation} \label{F prime with alpha}
    dF = \tfrac{h^2}{\alpha^A} (\alpha u') (\alpha \overline{u}')' + h^2(\overline{u}')^A (\alpha u')' - \tfrac{h^2}{\alpha^A}(\alpha u')(\overline{u}')^A d\alpha -|u|^2dV + 2(E - V)^A \real\left( u \overline{u}' \right).
\end{equation}

We now multiply \eqref{defn F with alpha} by $w$ and compute $d(wF)$:
\begin{equation} \label{deriv wF with alpha}
\begin{split}
    d(wF) &= F^Adw + w^AdF \\
    &= h^2 (\alpha u')(\overline{u}')^A dw +  (E - V)^A|u|^2dw \\
    &+ \tfrac{h^2}{\alpha^A} w^A (\alpha u') (\alpha \overline{u}')' + h^2 w^A (\overline{u}')^A (\alpha u')' - \tfrac{h^2}{\alpha^A}w^A(\alpha u')(\overline{u}')^A d\alpha \\
    &-w^A|u|^2dV + 2w^A(E - V)^A \real\left( u \overline{u}' \right). \\
    &= -w^A  \left(-\tfrac{h^2}{\alpha^A} (\alpha u') (\alpha \overline{u}')' - h^2 (\overline{u}')^A (\alpha u')' + 2(V - E)^A \real(u \overline{u}')- 2\real (i\varepsilon u \overline{u}') \right) \\
    &+ 2\ep w^A \imag \left(u \overline{u}'\right) + |u|^2d(w(E-V)) + h^2 (\alpha u')(\overline{u}')^A \Big(dw - w^A\tfrac{d\alpha}{\alpha^A}\Big)  \\
    &\ge -w^A  \left(-\tfrac{h^2}{\alpha^A} (\alpha u') (\alpha \overline{u}')' - h^2 (\overline{u}')^A (\alpha u')' + 2(V - E)^A \real(u \overline{u}')- 2\real (i\varepsilon u \overline{u}') \right) \\
    &+2\ep w^A \imag \left(u \overline{u}'\right)+(|x| + 1)^{-1-\delta}( (E_{\min}  - \sup_\R V)|u|^2 + h^2 (\alpha u')(\overline{u}')^A) \\
    &- \sum_{x \in J_{V} \setminus \{x_{j,N}\}_{j=1}^N} w^A |u|^2 (V^R - V^L) \delta_{x} - \sum_{x \in J_{\alpha} \setminus \{x_{j,N}\}_{j=1}^N} (\alpha^A)^{-1} h^2 w^A (\alpha u')(\overline{u}')^A (\alpha^R - \alpha^L) \delta_{x}.
  \end{split}
\end{equation}
To get lines seven and eight we plugged in \eqref{final lwr bds} and used $w^A \ge 1$.

We now integrate both sides of \eqref{deriv wF with alpha} over all of $\R$. Since $F \in L^1(\R)$ and is continuous off of a countable set, $F(x)$ tends to zero along a sequence of $x$-values tending to $+\infty$, and at which $F(x) = F^R(x) = F^L(x)$.  Similarly, $F(x) = F^R(x) = F^L(x) \to 0$ along a sequence of $x$-values tending to $-\infty$. Thus \eqref{ftc} gives $\int_\R d(wF) = 0$. Since the average values of functions that appear are equal to the functions themselves Lebesgue almost-everywhere, for each $N$, we arrive at,

\begin{equation} 
\begin{split} \label{pre penult est before limit}
   (1/&\max w)\int (|x| + 1)^{-1-\delta}\big((E_{\min}  - \sup_\R V)|u|^2 + \inf \alpha |hu'|^2\big)  \\ &\le \int  2|(P(h) - i\ep)u) \overline{u}'| + 2 \varepsilon |u u'|\\
   &+ \sum_{x \in J_{V} \setminus \{x_{j,N}\}_{j=1}^N}  |u|^2 (V^R - V^L) \delta_{x} + \sum_{x \in J_{\alpha} \setminus \{x_{j,N}\}_{j=1}^N} (\alpha^A)^{-1} h^2 (\alpha u')(\overline{u}')^A (\alpha^R - \alpha^L) \delta_{x}. 
   \end{split}
\end{equation}
Sending $N \to \infty$, recalling \eqref{apply BV} (which gives $\sup_N (\max w) < \infty$ via \eqref{rNN}), \eqref{defn u}, and\\ $u, u' \in L^\infty(\R)$, and using Young's inequality, we find  
\begin{equation} \label{pre penult est}
\begin{split}
    \int (|x| &+ 1)^{-1-\delta}\big(|u|^2 + |hu'|^2\big)\\
    & \le C \int  \frac{1}{\gamma h^2} |f|^2 + \gamma (|x| + 1)^{-1-\delta}|hu'|^2 +  2 \varepsilon |u u'| \qquad h, \, \gamma > 0. 
\end{split}
\end{equation}
Here and below, $C>0$ is a constant that may change from line to line, but it is always independent of $u$, $\ep$, and $h$. 

The second term on the right side of \eqref{pre penult est} can be absorbed into the left side by selecting $\gamma$ small enough. As for the term involving $\ep$, by Young's inequality,
\begin{equation*}
    \int |u \overline{u}'| \le \frac{1}{2 h \inf \alpha} \int |u|^2 + \frac{1}{2h} \int \alpha |h u'|^2, \qquad h > 0.
\end{equation*}
Then 
\begin{equation*}
    \begin{split}
        \int \alpha |hu'|^2 &= \real \int -h^2(\alpha u')' \overline{u} \\
        &= \real \int \left((P(h) - i\varepsilon) - V + E\right)u \overline{u} \\
        &\le\frac{1}{2} \int |(|x| + 1)^{-\frac{1+ \delta}{2}}f|^2 + \left(\frac{1}{2} + \|E_{\max} - V\|_{L^\infty}\right)\int |u|^2. 
    \end{split}
\end{equation*}
Substituting these observations and calculations into \eqref{pre penult est} gives, for $\varepsilon, h > 0$,
\begin{equation} \label{penult est dim one}
     \int (|x| + 1)^{-1-\delta}( |u|^2 + |hu'|^2)  \le \frac{C}{h^2} \int |f|^2 +  \frac{C \varepsilon}{h} \int |u|^2. 
\end{equation}

To finish, we rewrite $\varepsilon \int |u|^2$ and estimate, for any $\gamma > 0$,
\begin{equation} \label{gamma eqn dim one}
    \begin{split}
        \varepsilon \int |u|^2 &= - \imag \int (P(h) - i\varepsilon)u\overline{u} \\
        &\le \frac{1}{\gamma } \int |f|^2 + \gamma \int (|x| + 1)^{-1-\delta}|u|^2.
    \end{split}
\end{equation}
If we now take $\gamma$ sufficiently small (depending on $C$ and $h$), we may absorb the integral of \\ $(|x| + 1)^{-1-\delta}|u|^2$ in \eqref{gamma eqn dim one} into the left side of \eqref{penult est dim one}  to achieve
\begin{equation} \label{final estimate one dim}
    \int (|x| + 1)^{-1-\delta}(|u|^2 + |hu'|^2 ) \le  \frac{C}{h^2} \int |f|^2, \qquad \varepsilon >0, \, h \in (0,1].
\end{equation}
This completes the proof of \eqref{nontrap est oned}. \\
\end{proof}

\section{High frequency bound on the cutoff resolvent} \label{high energy bound section}

In this Section, we prove Theorem \ref{unif resolv est thm} as an application of Theorem \ref{nontrap thm oned}. We return to working with the operator $H : \mathcal{D}(H) \to \mathcal{H}$ as defined by \eqref{H}, where $\alpha, \beta : \R \to (0,\infty)$ are BV functions obeying \eqref{infs positive} and \eqref{perturbations of identity}. 

In that situation, $H$ is a \textit{black box Hamiltonian} in the sense of Sj\"ostrand and Zworski \cite{sz91}, as defined in \cite[Definition 4.1]{dz}. More precisely, in our setting this means the following. First, if  $u \in \mathcal{D}(H)$, then $u|_{\mathbb R\setminus [-R_0,R_0]} \in H^2(\mathbb R\setminus [-R_0,R_0])$.  Second, for any $u \in \mathcal{D}(H)$, we have $(Hu)|_{\mathbb R\setminus [-R_0,R_0]}  = - u''|_{\mathbb R\setminus [-R_0,R_0]}$. Third, any $u \in H^2(\mathbb R)$ which vanishes on a neighborhood of $[-R_0,R_0]$ is also in $\mathcal{D}(H)$. Fourth, $\textbf{1}_{[-R_0,R_0]} (H+i)^{-1}$ is compact on $\mathcal H$; this last condition follows from the fact that $\mathcal{D}(H) \subseteq H^1(\R)$.

Then, by \cite[Theorem 4.4]{dz}, for any $\chi \in C_0^\infty(\mathbb R; [0,1])$ that is identically one near $[-R_0, R_0]$, the cutoff resolvent \eqref{continued resolv} continues meromorphically $\mathcal{H} \to \mathcal{D}(H)$ from $\imag \lambda > 0$ to the complex plane. The poles of this continuation are precisely at those values $\lambda$ for which there is a solution $u$ to $Hu=\lambda^2 u$ having $u, u', Hu \in L^2_{\text{loc}}(\R)$ in the sense of distributions, and which is outgoing, i.e. obeys 
\begin{equation}\label{e:uoutgoing}
\pm x \ge R_0 \qquad \Longrightarrow \qquad u(x) = c_{\pm} e^{\pm i \lambda x},
\end{equation}
for some nonzero constants $c_{\pm}$.

Observe that $\lambda = 0$ is such a pole because we may take $u(x) = 1$ for all $x$. Observe further that this is the only pole in the closed half plane $\imag \lambda \ge 0$. Indeed, if $u$ satisfying \eqref{e:uoutgoing} solves $Hu = \lambda^2u$ with $\imag \lambda >0$, then $u \in \mathcal D(H)$ and we have $\lambda^2 \|u\|^2_{\mathcal H}  = \langle H u , u \rangle_{\mathcal H}  = \int_{\mathbb R} \alpha |u'|^2\ge 0$, which implies $\|u\|_{\mathcal H} = 0$ since $\lambda^2 \ge 0$ is impossible when $\imag \lambda > 0$. For $\lambda \in \mathbb R \setminus \{0\}$ this follows as in the proof of \cite[(2.2.12)]{dz}.


\begin{proof}[Proof of Theorem \ref{unif resolv est thm}]
Set $V_{\beta} \defeq 1- \beta$ and $ \mathcal{O} \defeq \{ \lambda \in \mathbb{C} : \real \lambda \neq 0, \text{ } \imag \lambda > 0 \}$. Note that $\supp V_\beta \subseteq [-R_0, R_0]$.  Define on $\mathcal{O}$ the following families of operators $\mathcal{H} \to \mathcal{H}$ with domain $\mathcal{D}(H)$,
\begin{gather}
\begin{split}
A(\lambda) &\defeq (\real \lambda)^{-2} \beta (H - \lambda^2)  \\
&=-(\real \lambda)^{-2} \partial_x \alpha \partial_x + V_\beta + (\imag \lambda)^2(\real \lambda)^{-2} \beta - i2 \imag \lambda(\real \lambda)^{-1}\beta - 1,  
\end{split} \label{A} \\ 
B(\lambda) \defeq -(\real \lambda)^{-2} \partial_x \alpha \partial_x + V_\beta + (\imag \lambda)^2(\real \lambda)^{-2} - 1 - i2 \imag \lambda(\real \lambda)^{-1}, \nonumber 
\end{gather}
Furthermore, define on $\mathcal{O}$ the family $\mathcal{H} \to \mathcal{H}$,
\begin{equation*}
D(\lambda) \defeq (\imag \lambda)^2(\real \lambda)^{-2}V_\beta - i2 \imag \lambda(\real \lambda)^{-1}V_\beta.
\end{equation*}

We have,
\begin{equation*}
B(\lambda) - A(\lambda) = D(\lambda).
\end{equation*}
Composing with inverses gives
\begin{equation*} 
A(\lambda)^{-1} - B(\lambda)^{-1} = B(\lambda)^{-1} D(\lambda)   A(\lambda)^{-1} \implies( I -  B(\lambda)^{-1}D(\lambda))A(\lambda)^{-1} = B(\lambda)^{-1},
\end{equation*}
Multiplying on the left and right by $\chi$ and noticing that $D(\lambda) = \chi D(\lambda) \chi$, we arrive at
\begin{equation} \label{prelim resolv id A and B}
(I - \chi B(\lambda)^{-1} \chi D(\lambda)) \chi A(\lambda)^{-1} \chi = \chi B(\lambda)^{-1} \chi, \qquad \lambda \in \mathcal{O}. 
\end{equation}

Next, choose $\lambda_0, \, \ep_0 > 0$ so that $\sup_{\R}V_\beta < 1 - \ep_0^2 \lambda_0^{-2}$. Identifying $E_{\min} \defeq 1 - \ep_0^2 \lambda_0^{-2}$, $E_{\max} = 1$, and $h \defeq|\real \lambda|^{-1}$, we see that Theorem \ref{nontrap thm oned} applies to $B(\lambda)^{-1}$. So for some $C > 0$ and a possibly larger  $\lambda_0$, we have 
\begin{equation}\label{est for B}
\| \chi B(\lambda)^{-1} \chi \|_{\mathcal{H} \to \mathcal{H}} \le C|\real \lambda|, \qquad |\real \lambda| \ge \lambda_0, \, 0 < \imag \lambda \le \ep_0 .
\end{equation}
Moreover,
\begin{equation} \label{est for D}
\|D(\lambda)\|_{\mathcal{H} \to \mathcal{H}} \le \ep_0 \|V_\beta\|_{L^\infty} \big( \frac{ 1}{\lambda^2_0} + \frac{ 2}{\lambda_0} \big), \qquad |\real \lambda| \ge \lambda_0, \, 0 < \imag \lambda \le \ep_0.
\end{equation}
Thus, increasing $\lambda_0$ again if needed, we can invert $(I - \chi B(\lambda)^{-1} \chi D(\lambda))$ by a Neumann series when $|\real \lambda| \ge \lambda_0$, $0 < \imag \lambda < \ep_0$. From \eqref{prelim resolv id A and B}, \eqref{est for B}, and \eqref{est for D}, we find
\begin{equation} \label{Neumann A and B}
\chi A(\lambda)^{-1} \chi = \left(\sum_{k=0}^\infty (\chi B(\lambda)^{-1} \chi D(\lambda))^k \right) \chi B(\lambda)^{-1} \chi, \quad  |\real \lambda | \ge \lambda_0, \, 0 < \imag \lambda \le  \ep_0.  
\end{equation}

Since
\begin{equation*}
\chi R(\lambda) \chi = (\real \lambda)^{-2}  \chi A(\lambda)^{-1} \chi  \beta, \qquad \lambda \in \mathcal{O},
\end{equation*}
\eqref{unif resolv est} follows from \eqref{est for B}, \eqref{est for D}, and \eqref{Neumann A and B}, at least when $|\real \lambda | \ge \lambda_0, \, 0 \le \imag \lambda \le  \ep_0$. To get \eqref{unif resolv est} for $|\real \lambda | \ge \lambda_0, \, |\imag \lambda| \le  \ep_0$, we appeal to a resolvent identity argument due to Vodev \cite[Theorem 1.5]{vo14}, which was adapted to the non-semiclassical (see, for instance, \cite[Lemma 5.1]{sh18}). It yields, for possibly smaller $\ep_0$, holomorphicity of $\chi R(\lambda) \chi$ in $|\real \lambda | \ge \lambda_0, \, - \ep_0 \le \imag \lambda  \le  0$, along with a bound of the form \eqref{unif resolv est} there.  
\\
\end{proof}

To conclude this section, we consider the two by two matrix operator
\begin{equation*}
    G \defeq -i \begin{pmatrix} 0 & 1 \\ -H & 0 \end{pmatrix} : \mathcal{D}(H) \oplus \mathcal{H} \to \mathcal{H} \oplus \mathcal{H},
\end{equation*}
which arises naturally from rewriting \eqref{wave eqn} as a first order system. A short computation yields, 
\begin{equation} \label{inv G plus lambda}
(G + \lambda)^{-1} = \begin{pmatrix} -\lambda R(\lambda) & -iR(\lambda) \\ i \lambda^2 R(\lambda) + i &  -\lambda R(\lambda)  \end{pmatrix}, \qquad \imag \lambda > 0.
\end{equation}

The following Corollary of Theorem \ref{unif resolv est thm} is essentially well-known, and is an important input to the proof of Theorem \ref{LED thm} in Section \ref{wave decay section}. We give the proof by recalling several results from \cite{bu03, vo14, dz}.
\begin{corollary} \label{matrix op cor}
Let $\chi \in C^\infty_0(\R; [0,1])$ be identically one near $[-R_0, R_0]$. The operator 
\begin{equation} \label{matrix op cutoffs}
    \chi (G + \lambda)^{-1} \chi \defeq \begin{pmatrix} -\lambda \chi R(\lambda) \chi & -i\chi R(\lambda)\chi \\ i \lambda^2 \chi R(\lambda) \chi + i \chi^2 &  -\lambda \chi R(\lambda) \chi  \end{pmatrix} : H^1(\R) \oplus L^2(\R) \to H^1(\R) \oplus L^2(\R)
\end{equation}
continues meromorphically from $\imag \lambda > 0$ to $\C$. It has no poles on $\mathbb R \setminus \{0\}$ and at $\lambda = 0$ it has a simple pole: more precisely, if $w_0 \in H^1(\mathbb R)$ and $w_1 \in L^2(\mathbb R)$, then 
\begin{equation}\label{e:residuecomp}
\lim_{\lambda \to 0} \lambda \chi (G+\lambda)^{-1} \chi \begin{pmatrix} w_0 \\ w_1  \end{pmatrix} =  \begin{pmatrix}  -i \lim_{\lambda \to 0} \lambda \chi  R(\lambda) \chi w_1\\ 0   \end{pmatrix} = \begin{pmatrix} \frac 12\langle \chi, w_1 \rangle_{\mathcal H} \chi \\ 0  \end{pmatrix}.
\end{equation}

Furthermore, there exist $C,\, \lambda_0, \, \ep_0 > 0$ so that
 \begin{equation} \label{matrix op unif bd}
       \|\chi (G + \lambda)^{-1} \chi \|_{H^1(\R) \oplus L^2(\R) \to H^1(\R) \oplus L^2(\R)} \le C, 
 \end{equation}
 whenever $|\real \lambda| \ge \lambda_0$, and $|\imag \lambda| \le \ep_0$.
\end{corollary}

\begin{proof}
As described above, by \cite[Theorem 4.4]{dz} and the proof of \cite[(2.2.12)]{dz}, the operator $\chi R(\lambda) \chi : L^2(\R) \to \mathcal{D}(H)$ continues meromorphically from $\imag \lambda >0$ to $\mathbb{C}$, and has no poles in $\R \setminus \{0\}$. This implies that each entry of \eqref{matrix op cutoffs} continues meromorphically as an operator between the appropriate spaces, again without poles in $\R \setminus \{0\}$.

Next, as in the proof of \cite[Theorem 2.7]{dz}, \eqref{perturbations of identity} implies that near $\lambda  = 0$,
\begin{equation} \label{resolv near zero}
\chi R(\lambda) \chi w_1 =  \frac i {2 \lambda} \langle \chi, w_1 \rangle_{\mathcal H} \chi + A(\lambda)w_1, 
\end{equation}
where $A(\lambda) : \mathcal{H} \to \mathcal{D}(H)$ is holomorphic near zero, and hence we have \eqref{e:residuecomp}.

With \eqref{unif resolv est} already in hand, to establish \eqref{matrix op unif bd}, it suffices to supply $\lambda_0, \ep_0 > 0$ so that 
\begin{gather}
\lambda^2 \chi R(\lambda) \chi +  \chi^2 = \chi H R(\lambda) \chi:  H^1(\R) \to L^2(\R) , \label{two one entry} \\
\lambda \chi R(\lambda) \chi : H^1(\R) \to H^1(\R), \label{one one entry}
\end{gather}
are uniformly bounded for $ |\real \lambda| \ge \lambda_0$ and $|\imag \lambda| \le \ep_0$. When $ |\real \lambda| \ge \lambda_0$ and $0 < \imag \lambda \le \ep_0$ this follows from the proof of \cite[Proposition 2.4]{bu03}, see in particular \cite[(2.14), (2.17), and (2.19)]{bu03}. To extend these bounds to strips below the real axis, we use once more Vodev's resolvent identity (\cite[Theorem 1.5]{vo14} and \cite[Lemma 5.1]{sh18}).

\end{proof}

\section{Wave decay} \label{wave decay section}

\begin{proof}[Proof of Theorem \ref{LED thm}]

This section follows part of Section 3 of \cite{vo99}. 

Recall that we use $w(t)$ to denote the solution \eqref{soln spectral thm} to \eqref{wave eqn}, with initial data $w_0 \in \mathcal{D}(H)$ and $w_1 \in \mathcal{D}(H^{1/2})$. We have $\supp w_0, \, \supp w_1 \subseteq (-R,R)$, and the coefficients of \eqref{wave eqn} obey \eqref{infs positive} and \eqref{perturbations of identity}. We want to show that the local energy $\| w(\cdot, t) - w_\infty \|_{H^1(-R_1,R_1)}  + \|\partial_t w(\cdot, t)  \|_{L^2(-R_1,R_1)}$ decays exponentially, for a suitable constant $w_\infty$.

Choose $\chi \in C_0^\infty(\mathbb R; [0,1])$ such that $\chi =1$ near $[-R_1, R_1] \cup [-R, R] \cup [R_0, R_0]$ ($R_0$ given as in \eqref{perturbations of identity}). Recall from Corollary \ref{matrix op cor}  that there exist $C$, $\lambda_0$, $\varepsilon_0 > 0$ such that
\begin{equation*}
\|\chi(G + \lambda)^{-1}\chi f\| \le C\|f\|,
\end{equation*}
whenever $|\real \lambda | \ge \lambda_0$ and $|\imag \lambda| \le \varepsilon_0$, where here and for the rest of this section all norms are $H^1(\R) \oplus L^2(\R)$ unless otherwise specified.

We have
\[\begin{split}
w(t) &= \cos(t H^{1/2}) w_0 + \sin(tH^{1/2})H^{-1/2} w_1,\\
\partial_t w(t) &= -\sin(tH^{1/2}) H^{1/2} w_0 + \cos(tH^{1/2})w_1, \\
\partial_t^2 w(t) &= - H w(t).
\end{split}\]
Consequently, after defining
\[
 f \defeq \left( \begin{array}{c}w_0\\w_1 
\end{array} \right), \qquad U(t) f \defeq \left( \begin{array}{c} 
w(t) \\
\partial_t w(t)\end{array}\right),
\]
we have
\begin{equation}\label{e:utfbound}
\|U(t) f \| \le C\|f\|, \qquad \partial_t  U(t) f =i G U(t) f, \qquad U(t)U(s) f =  U(t+s)f,
\end{equation}
for all real $t$ and $s$, for some $C>0$ independent of $t$ and $f$. (Note that $U(t)f$ is still defined even if only $w_0 \in \mathcal{D}(H^{1/2})$, $w_1 \in \mathcal{H}$.) 

Take $\varphi \in C^\infty(\mathbb R; [0,1])$ which is $0$ on $(-\infty,1]$ and $1$ on $[2,\infty)$ and put
\[
 W(t) f \defeq \varphi(t) U(t) f =   \int_{\imag \lambda = \varepsilon}  e^{-it\lambda} \check W(\lambda) \, d \lambda, \qquad \check W(\lambda) \defeq \frac 1 {2\pi} \int_{\R} e^{is\lambda}W(s)fds.
\]
Since $\partial_t W(t)f = \varphi'(t) U(t)f + i G W(t)f$ we get
\[
 W(t) f = \int_{\imag \lambda = \varepsilon}  e^{-it\lambda} (G + \lambda)^{-1}(i\varphi' U f)\check{~}(\lambda)\,d \lambda.
\]

Since $\supp w_0, \, \supp w_1 \subseteq (-R,R)$, by finite speed of propagation for the wave equation, and increasing $R > 0$ if necessary, we have that, $x \mapsto U(t)f$ is supported in $(-R,R)$ for all $t \in [0,2]$. By continuity of integration, the same is true of $x \mapsto (i\varphi' U f)\check{~}(\lambda)$ for every $\lambda$. Hence $ \lambda \mapsto (i\varphi' U f)\check{~}(\lambda)$ is entire and rapidly decaying as  $|\real \lambda| \to \infty$ with $|\imag \lambda|$ remaining bounded and further $(i\varphi' U f)\check{~}(\lambda) = \chi(i\varphi' U f)\check{~}(\lambda)$ . Take $\varepsilon \in(0,\varepsilon_0)$ small enough that $\lambda = 0$ is the only pole of $\chi R(\lambda)\chi $ (and hence also of $\chi (G+\lambda)^{-1}\chi$ by \eqref{inv G plus lambda}) in the half plane $\imag \lambda \ge -\varepsilon$. By deformation of contour,
\[
\chi  W(t)f =  \lim_{\lambda \to 0} \lambda \chi (G+\lambda)^{-1} \chi \int_{\R} \varphi'(s) U(s) f\,ds + \int_{\imag \lambda = -\varepsilon}  e^{-it\lambda} \chi (G + \lambda)^{-1}\chi (i\varphi' U f)\check{~}(\lambda) d \lambda. 
\]
To simplify this, use \eqref{e:residuecomp} and put
\[
 W_1(t)f := \int_{-\infty}^\infty e^{-it\lambda} (G + \lambda - i \varepsilon)^{-1} (i\varphi' U f)\check{~}(\lambda - i \varepsilon)\, d \lambda,
\]
to obtain
\[
\chi W(t) f = \begin{pmatrix} \frac 12 \chi \int_{\R} \int_0^2 \beta(x) \chi(x) \varphi'(s) \partial_sw(s,x)ds dx \\ 0 \end{pmatrix} + e^{-\varepsilon t} \chi W_1(t)f. 
\]
To simplify the first term, we integrate by parts in $s$, using $\varphi' = -(1-\varphi)'$, to obtain
\[
\int_{\R} \int_0^2  \beta(x) \chi(x) \varphi'(s) \partial_sw(s,x)\,ds\,dx =\int_{\R}\beta \chi w_1 +\int_{\R} \int_0^2   \beta(x) \chi(x) (1-\varphi(s)) \partial_s^2w(s,x)\,ds\,dx. 
\]
Now observe that $\partial_s^2 w = - H w$ and $\langle \chi, H w(s) \rangle_{\mathcal{H}} =0$ for $s \in [0,2]$ (the latter fact following from $\chi = 1$ near $[-R, R]$ and $\supp w(s) \subseteq (-R, R)$ for $s \in [0,2]$). Thus
\[
\chi W(t) f = \frac 12  \begin{pmatrix} \langle \chi, w_1 \rangle_{\mathcal{H}} \chi  \\ 0  \end{pmatrix}+ e^{-\varepsilon t} \chi W_1(t)f.
\]

It now suffices to show that
\[
\|\chi W_1(t)f\| \le C e^{\varepsilon t/2}\|f\|.
\]
To prove this, we first use Plancherel's theorem, along with the fact that by \eqref{e:utfbound}, the operator norm  $\|U(t)\|_{H^1(\R) \oplus L^2(\R) \to H^1(\R) \oplus L^2(\R)}$ is uniformly bounded for all $t \in \mathbb R$, as well as the fact that by Corollary \ref{matrix op cor}, for any $\varepsilon>0$ small enough, the operator norm $\|\chi (G+\lambda-i\varepsilon)^{-1} \chi\|_{H^1(\R) \oplus L^2(\R) \to H^1(\R) \oplus L^2(\R)}$ is uniformly bounded for all $\lambda \in \mathbb R$, to obtain
\begin{equation}\label{e:planch}\begin{split}
\int  \|\chi W_1(t)f\|^2 \, dt &= C \int \|\chi (G + \lambda - i \varepsilon)^{-1} (\varphi' U f)\check{~}(\lambda - i \varepsilon)\|^2 \, d \lambda \\
&\le C \int \|(\varphi' U f)\check{~}(\lambda - i \varepsilon)\|^2 \, d \lambda \\ &= C \int e^{2\varepsilon t}  \|\varphi'(t) U(t) f\|^2 \, d t \le C \|f\|^2.
\end{split}\end{equation}
Next, compute
\[
 (\partial_t - i G) \chi W_1(t)f = - i [G,\chi] W_1(t)f + \varepsilon \chi W_1(t)f - i \chi \int e^{-it\lambda }(i\varphi' U f)\check{~}(\lambda - i \varepsilon)\, d \lambda \qefed \widetilde W_1(t)f.
\]
Integrating both sides of  $\partial_s( U(t-s) \chi W_1(s)f) = U(t-s)\widetilde W_1(s)f$ from $s=0$ to $s=t$ gives
\[
 \chi W_1(t)f  = U(t)\chi W_1(0)f  + U(t)\int_0^t U(-s)\widetilde W_1(s)f\,ds.
\]
Thus
\[
 \|\chi W_1(t) f\| \le C \big(\|f\| +  \int_0^t \|\widetilde W_1(s)f\| ds \big) \le C\big( \|f\| +  t^{1/2} \Big(\int_0^t \|\widetilde W_1(s)f\|^2 ds\Big)^{1/2} \big).
\]
Now check that, since $\|[G,\chi] W_1(t)f\| \le C\|W_1(t)f\|$, calculating as in \eqref{e:planch}, we obtain \\ $\int \|\widetilde W_1(s)f\|^2\,ds \le C \|f\|^2$, and hence
\[
\|\chi W_1(t)f\| \le C (1 + t^{1/2}) \|f\|
\]
as desired.\\
\end{proof}

\section{Acknowledgments}

K.D. gratefully acknowledges support under NSF grant DMS 1708511. J.S. gratefully acknowledges support from four sources: ARC grant DP180100589, NSF grant DMS 1440140 while in residence at the Mathematical Sciences Research institute in Berkeley, CA, NSF grant DMS 2204322, and a University of Dayton Catholic Intellectual Tradition grant. Thanks also to the anonymous referee for helpful comments and corrections.

\appendix

\section{Characterization of $\mathcal{D}(H^{1/2})$} \label{H1 appendix}

In this Appendix we show

\begin{lemma}[{\cite{re22b}}]
\label{square root lemma}
It holds that $\mathcal{D}(H^{1/2}) = H^1(\R)$, and that\\ $u \mapsto \|u\|_{H^1}$, $u \mapsto (\|u\|^2_{\mathcal{H}} + \|H^{1/2} u\|^2_{\mathcal{H}})^{1/2}$ are equivalent norms.
\end{lemma}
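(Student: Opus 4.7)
The plan is to identify $H$ with the self-adjoint operator associated, via the first representation theorem for closed forms, to the quadratic form
\[
q(u) \defeq \int_\R \alpha(x)|u'(x)|^2\,dx, \qquad u \in H^1(\R),
\]
viewed as a densely defined form on $\mathcal{H}$, and then to apply the second representation theorem to read off $\mathcal{D}(H^{1/2})$ and the norm equivalence.

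First I would check that $q$ is a nonnegative, densely defined, closed quadratic form on $\mathcal{H}$. Density follows because $C_c^\infty(\R) \subset H^1(\R)$ is dense in $L^2(\R) = \mathcal{H}$. Since $\inf_\R \alpha > 0$ and $\beta$ has positive upper and lower bounds, the form norm $(\|u\|_{\mathcal{H}}^2 + q(u))^{1/2}$ is equivalent to $\|u\|_{H^1(\R)}$, which gives closedness and nonnegativity simultaneously. Let $T$ denote the associated nonnegative self-adjoint operator on $\mathcal{H}$; by definition $u \in \mathcal{D}(T)$ if and only if $u \in H^1(\R)$ and the linear functional $\phi \mapsto q(u,\phi)$ extends continuously to $\mathcal{H}$, and then $q(u,\phi) = \langle Tu, \phi\rangle_{\mathcal{H}}$ for all $\phi \in H^1(\R)$.

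Next I would show $T = H$. If $u \in \mathcal{D}(T)$, then $u \in H^1(\R) \hookrightarrow L^\infty(\R)$ by the one-dimensional Sobolev embedding, and testing the form identity against $\phi \in C_c^\infty(\R)$ gives $(\alpha u')' = -\beta T u$ distributionally, which places this derivative in $L^2(\R)$. Combined with $\alpha u' \in L^2(\R)$, this puts $\alpha u' \in H^1(\R) \hookrightarrow L^\infty(\R)$, and $\inf_\R \alpha > 0$ yields $u' \in L^\infty(\R)$. Hence $u \in \mathcal{D}(H)$ with $Hu = Tu$. Conversely, if $u \in \mathcal{D}(H)$, then $u \in H^1(\R)$ and $\alpha u' \in L^2(\R)$ together with $(\alpha u')' \in L^2(\R)$ puts $\alpha u'$ in $H^1(\R)$, so its continuous representative vanishes at infinity. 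For any $\phi \in H^1(\R) \subset C_0(\R)$, integration by parts then carries no boundary contribution, giving $q(u,\phi) = \langle Hu, \phi\rangle_{\mathcal{H}}$, so $u \in \mathcal{D}(T)$ and $Tu = Hu$.

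The second representation theorem for closed nonnegative forms now yields $\mathcal{D}(T^{1/2}) = H^1(\R)$ and $\|T^{1/2}u\|_{\mathcal{H}}^2 = q(u)$, while uniqueness of the nonnegative square root gives $T^{1/2} = H^{1/2}$. Combining $\|H^{1/2}u\|_{\mathcal{H}}^2 = q(u)$ with the upper and lower bounds on $\alpha$ shows that $\|H^{1/2}u\|_\mathcal{H}$ is comparable to $\|u'\|_{L^2(\R)}$, and together with comparability of $\|u\|_\mathcal{H}$ and $\|u\|_{L^2(\R)}$, the asserted norm equivalence follows. The main subtlety is the bidirectional matching of $\mathcal{D}(T)$ and $\mathcal{D}(H)$: one must recover the $L^\infty$ membership of $u'$ that is built into the definition of $\mathcal{D}(H)$ from the form condition alone (done by applying Sobolev embedding to $\alpha u'$ rather than to $u'$ directly), and one must justify the vanishing of boundary terms in the converse direction, which uses $\alpha u' \in H^1(\R)$ together with $H^1(\R) \hookrightarrow C_0(\R)$.
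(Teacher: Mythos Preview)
Your argument is correct and takes a genuinely different route from the paper. The paper works from the characterization of the form domain as the \emph{completion of $\mathcal{D}(H)$} in the form norm; it then must show directly that every $u \in H^1(\R)$ can be approximated in $H^1$ by elements of $\mathcal{D}(H)$, which it does by an explicit construction (approximate $\alpha u'$ by smooth compactly supported $\tilde v_j$, correct the integral so that $\int v_j/\alpha = 0$, and set $u_j = \int_{-\infty}^x v_j/\alpha$). You instead start from the form $q$ on $H^1(\R)$, invoke Kato's first representation theorem to produce an operator $T$, and then match $T$ with $H$ at the operator-domain level; the second representation theorem then hands you $\mathcal{D}(H^{1/2}) = H^1(\R)$ for free. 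Your route is more conceptual and avoids the bespoke approximation, at the cost of importing the Kato machinery as a black box; the paper's route is more self-contained and makes the density of $\mathcal{D}(H)$ in $H^1(\R)$ explicit. The one place where your argument does real work is the verification $\mathcal{D}(T) = \mathcal{D}(H)$, and your treatment there---in particular recovering $u' \in L^\infty$ via $\alpha u' \in H^1(\R) \hookrightarrow L^\infty(\R)$, and killing boundary terms using $\alpha u',\,\phi \in H^1(\R) \hookrightarrow C_0(\R)$---is sound.
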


\begin{proof}
First, recall the well-known fact that $\mathcal{D}(H^{1/2})$ equals the form domain associated to $H$, namely, $\mathcal{D}(H^{1/2})$ is the completion of $\mathcal{D}(H)$ with respect to the norm $\| u\|^2_{+1} \defeq \langle Hu,u \rangle_{L^2} + \langle u,u \rangle_{L^2}$. On $\mathcal{D}(H)$, it's clear that there exist $C, c> 0$ so that $ c\| \cdot \|^2_{H^1} \le \| \cdot \|^2_{+1} \le C\| \cdot \|^2_{H^1}$.

If $u \in \mathcal{D}(H^{1/2})$, then there exists a $\|\cdot\|_{+1}$-Cauchy sequence $u_j \in \mathcal{D}(H) $ converging to $u$ in $\mathcal{H}$ (or, equivalently, converging to $u \in L^2(\R)$). Because $\|\cdot\|_{+1}$ and $\| \cdot \|_{H^1}$ are equivalent on $\mathcal{D}(H)$, we get that the $u_j$ are also $\|\cdot\|_{H^1(\R)}$-Cauchy. By completeness of $H^1(\R)$, we conclude $u \in H^1(\R)$. We also have 
\begin{equation*}
\|H^{1/2}u \|^2_{\mathcal{H}} = \lim_{j \to \infty}\|H^{1/2}u_j \|^2_{\mathcal{H}} = \lim_{j \to \infty} \langle Hu_j, u_j \rangle_{\mathcal{H}} \le C \lim_{j \to \infty} \|u_j \|^2_{H^1} = C \| u\|^2_{H^1},
\end{equation*}
where the first equals sign follows since $H^{1/2}$ is a closed operator.

To show  $H^1(\R) \subseteq \mathcal{D}(H^{1/2})$, first suppose  $u \in H^1(\R)$ has compact support. Approximate $\alpha u'$ in $L^2(\R)$ by $\tilde{v}_j \in C^\infty_0(\R)$ which have support in a fixed compact set. Choose $\varphi_0 \in C^\infty_0(\R)$ with $\int \varphi_0/ \alpha = 1$, and put
\begin{equation*}
    v_j \defeq \tilde{v}_j - \big(\int \tilde{v}_j/ \alpha \big) \varphi_0.
\end{equation*}
Then $\int v_j/\alpha = 0$ and the $v_j/\alpha \to u'$ in $L^2(\R)$ since $\int u' = 0$. We clearly have \\$u_j \defeq \int_{-\infty}^x v_j/\alpha \in \mathcal{D}(H)$. Moreover, because $\int_{-\infty}^x v_j/\alpha \to \int_{-\infty}^x u' = u(x)$ locally uniformly in $x$, it follows that $u_j \to u$ in $H^1(\R)$, and that the $u_j$ are $\| \cdot \|_{+1}$-Cauchy. Hence $u \in \mathcal{D}(H^{1/2})$.

For general $u \in H^1(\R)$, choose a sequence $\tilde{u}_j$ of compactly supported functions with \\ $\|\tilde{u}_j - u\|_{H^1} \le 2^{-j-1}$. For each $j$, use the construction of the previous paragraph to find $u_j \in \mathcal{D}(H)$ with $\|u_j - \tilde{u}_j\|_{H^1} \le 2^{-j-1}$. Then the $u_j \to u$ in $H^1(\R)$ and
\begin{equation*}
   \| u_{j} - u_{k} \|^2_{+1} \le C\|u_j - u_k\|_{H^1} \to 0 \qquad \text{as $j,\, k \to \infty$.} 
\end{equation*}
Thus $u \in \mathcal{D}(H^{1/2})$ and
\begin{equation*}
    c\|u\|^2_{H^1} = c\lim_{j \to \infty}\|u_j\|^2_{H^1} \le  \lim_{j \to \infty} \| u_j\|^2_{+1} = \lim_{j \to \infty} \big( \| H^{1/2} u_j\|^2_{\mathcal{H}} + \|  u_j\|^2_{\mathcal{H}} \big) = \| H^{1/2} u\|^2_{\mathcal{H}} + \|  u\|^2_{\mathcal{H}}.
\end{equation*}
\end{proof}

\section{Elementary properties of BV functions} \label{BV appendix}

This appendix collects some facts about functions of bounded variation which can be found in \cite{vh} and \cite{afp}. The main results are the integration by parts formula \eqref{Folland IBP}, the product rule \eqref{e:prod}, and the chain rules \eqref{chain rule continuous} and \eqref{chain rule jumps}. The books \cite{vh} and \cite{afp} are mostly concerned with higher dimensional problems, so we present proofs for the much simpler one dimensional case here.

We continue to use the notation \eqref{LRA} and \eqref{df} from Section \ref{bv review section}. For $\psi  \in L^1(\mathbb R)$ compactly supported and satisfying $\int \psi = 1$, and for $\varepsilon >0 $, let 
\begin{equation}\label{e:reg}
 f_\varepsilon(x) =  \int f(x-\varepsilon y)\psi(y)dy = \varepsilon^{-1} \int f(y) \psi(\varepsilon^{-1}(x-y))dy. 
\end{equation}
Then, accordingly as $\psi$ is supported in $[0,\infty)$ or supported in $(-\infty,0]$ or even, we have
\begin{equation}\label{e:limconv}
 \lim_{\varepsilon \to 0^+} f_\varepsilon = f^L \  \textrm{or } f^R \ \textrm{or } f^A, \qquad \text{pointwise on $\R$.}
\end{equation}
Indeed, use the dominated convergence theorem in the first two cases and average them to get the third case.

\begin{proof}[Proof of Proposition \ref{ibp bv prop}]
The integration by parts formula \eqref{Folland IBP} follows as in the proof of \cite[Theorem 3.36]{fo}. Indeed, let $\Omega = \{(x,y) \in \R^2: a < x \le y \le b \}$. Since $\varphi$ is continuous and $\varphi'$ is piecewise continuous, it holds that $d\varphi = \varphi' dx$. Using Fubini's theorem, we evaluate the product measure $df \times d\varphi$ two different ways,
\begin{equation*}
    \begin{gathered}
    \int_{(a,b] \times (a,b]} \mathbf{1}_\Omega (x,y) df(x) \times d\varphi(y)
    = \int_{(a,b]} \int_{(a,y]} df(x) d\varphi(y) \\
    = \int_{(a,b]} (f^R(y) - f^R(a)) \varphi'(y)dy = \int_{(a,b]} f(y) \varphi'(y) dy,
    \end{gathered}
\end{equation*}
where we used that $f^R = f$ Lebesgue almost everyone, and that the boundary terms vanish since $\varphi(a) = \varphi(b) = 0.$ Similarly,
\begin{equation*}
\begin{gathered}
    \int_{(a,b] \times (a,b]} \mathbf{1}_\Omega (x,y) d_f(x) \times d\varphi(y)
    = \int_{(a,b]} \int_{[x,b]}  d\varphi(y) df(x)
    = -\int_{(a,b]}  \varphi(x)df(x).
    \end{gathered}
    \end{equation*}
\end{proof}

\begin{proof}[Proof of Proposition \ref{prod rule bv prop}]
Let $\psi \in C_0^\infty(\mathbb R)$ be an even function satisfying $\int \psi =1$. For any $\varepsilon>0$, define $f_\varepsilon$ by \eqref{e:reg}, and for any, $\eta>0$ define $g_\eta$ similarly. Then 
\begin{equation}\label{e:prodreg}
 (f_\varepsilon g_\eta)' = f_\varepsilon (g_\eta)' + g_\eta (f_\varepsilon)'.
\end{equation}
We now show that taking $\eta \to 0^+$ and then $\varepsilon \to 0^+$ in \eqref{e:prodreg} gives \eqref{e:prod}. 
Let $\varphi \in C_0^\infty(\mathbb R)$.  First, by integration by parts,
\[
 \lim_{\varepsilon \to 0^+}  \lim_{\eta \to 0^+} \int \varphi (f_\varepsilon g_\eta)'dx = -  \lim_{\varepsilon \to 0^+}  \lim_{\eta \to 0^+} \int \varphi' f_\varepsilon g_\eta dx.
 \]
Then, we observe that $\int \varphi' f_\varepsilon g_\eta dx \to \int \varphi' f_\varepsilon g dx$ by the dominated convergence theorem. Indeed, \\ $g_\eta \to g^A  \stackrel{a.e.}{=} g $ by \eqref{e:limconv}, and $|\varphi' f_\varepsilon g_\eta|$ is uniformly bounded for $\varepsilon$ fixed and $\eta$ small. Similarly, \\ $\int \varphi' f_\varepsilon g dx \to \int \varphi' f g dx.$ Finally, $-\int \varphi' f g dx = \int \varphi d(f g)$ by \eqref{Folland IBP}.

Next
\begin{equation*}
\begin{split}
 \lim_{\varepsilon \to 0^+}  \lim_{\eta \to 0^+}  \int \varphi f_\varepsilon g_\eta' dx &= -\lim_{\varepsilon \to 0^+}  \lim_{\eta \to 0^+}  \int (\varphi f_\varepsilon)' g_\eta dx \\
 & =  -\lim_{\varepsilon \to 0^+}   \int (\varphi f_\varepsilon)' g dx \\
 &=  \lim_{\varepsilon \to 0^+}   \int \varphi f_\varepsilon dg  \\
 &= \int  \varphi f^A dg.
\end{split}
\end{equation*}
For the first equal sign, we integrate by parts; for the second, we use the dominated convergence theorem, as in the previous paragraph. The third equal sign follows from \eqref{Folland IBP}, and the fourth from another application of the dominated convergence theorem (and \eqref{e:limconv}).

Continuing, by \eqref{e:reg}, \eqref{Folland IBP} and Fubini's theorem,
\begin{equation} \label{exchange ep}
\begin{split}
 \int \varphi g^A(f_\varepsilon)'dx &=  \int \varphi(x) g^A(x)  \varepsilon^{-2} \left[\int \psi'(\varepsilon^{-1}(x-y))f(y) dy \right]dx \\
&=   \int \varphi(x) g^A(x)  \varepsilon^{-1} \left[\int \psi(\varepsilon^{-1}(x-y))df(y) \right]dx\\
 &=  \varepsilon^{-1}\int\left[\int \varphi(x) g^A(x) \psi(\varepsilon^{-1}(y-x))dx \right]df(y) =   \int (\varphi g^A)_\varepsilon df,
\end{split}
\end{equation}
where for the third equal sign we used that $\psi$ is even.
Since $\varphi$ and $\psi$ have compact support, the integrals against $df$ make sense, and the application of Fubini's theorem is justified (even though $df$ may be finite only after it is restricted to a bounded Borel set). Finally,
\[
  \lim_{\varepsilon \to 0^+}  \lim_{\eta \to 0^+}  \int \varphi g_\eta (f_\varepsilon)'dx = \lim_{\varepsilon \to 0^+} \int \varphi g^A(f_\varepsilon)'dx =\lim_{\varepsilon \to 0^+} \int (\varphi g^A)_\varepsilon df= \int \varphi g^A df,
\]
by the dominated convergence theorem, \eqref{e:limconv}, and \eqref{exchange ep}.\\
\end{proof}

\begin{proof}[Proof of Proposition \ref{chain rule bv prop}]
Using the decomposition \eqref{decompose f}, we see that  $e^f = e^{f_{r,+}} e^{-f_{r,-}}$ has locally bounded variation, as it is a product of functions of locally bounded variation.

Let $\varphi, \psi \in C_0^\infty(\mathbb R)$, with $\psi$ even and satisfying $\int \psi =1$. To show \eqref{chain rule continuous}:

\begin{equation*}
\begin{split}
    \int \varphi d(e^f) &= - \int e^{f} \varphi' dx\\
    &= -\int \lim_{N \to \infty}  \sum_{n = 0}^N \frac{f^n}{n!} \varphi' dx \\
    &= - \lim_{N \to \infty} \sum_{n = 0}^N \int   \frac{(f^n)}{n!} \varphi' dx \\
    &=  \lim_{N \to \infty} \Big( \sum_{n=1}^N \int \frac{\varphi}{n!} df^n - \sum_{n=0}^N \int d (\varphi \frac{f^n}{n!}) \Big) \\
    &=  \lim_{N \to \infty} \sum_{n=1}^N \int \frac{\varphi}{(n-1)!} f^{n-1} df \\
    &= \int \varphi e^{f} df.\\
\end{split} 
\end{equation*}

The first equal sign follows from \eqref{Folland IBP}. The third and sixth equal signs use the dominated convergence theorem; the fourth follows by \eqref{e:prod}, and the fifth by \eqref{ftc} and the Remark after \eqref{e:prod}.

For \eqref{chain rule jumps}, we first note that, because $g$ has locally bounded variation, so does $e^g$. We compute, 
\begin{equation*}
\begin{split}
    \int \varphi d(e^g) &= - \int e^{g} \varphi' dx\\
    &= - \int_{-\infty}^{x_1} e^{r_0} \varphi' dx -  \sum_{j=1}^{N-1} \int_{x_{j}}^{x_{j+1}} e^{r_j}  \varphi' dx - \int_{x_N}^\infty e^{r_N} \varphi' dx \\
    &=\sum_{j=1}^N (e^{r_j} - e^{r_{j-1}})\varphi(x_j).
\end{split} 
\end{equation*}
\\
\end{proof}


\begin{thebibliography}{0}



\bibitem[AFP00]{afp} L. Ambrosio, N. Fusco, and D. Pallara. Functions of Bounded Variation and Free Discontinuity Problems. Clarendon Press, Oxford (2000)

\bibitem[AGPP22]{agpp22} A. Arnold, S. Geevers, I. Perugia, and D. Ponomarev. On the exponential time-decay for the one-dimensional wave equation with variable coefficients. \textit{Comm. Pure and Appl. Math.} 21(10) (2022), 3389--3405


\bibitem[BIZ16]{biz16} C. Beli, L. Ignat, and E Zuazua. Dispersion for 1-D Schr\"odinger and wave equations with BV coefficients. \textit{Ann. Inst. H. Poincar\'e Anal. Non Lin\'eaire} 33(6) (2016), 1473--1495

\bibitem[Bu98]{bu98} N. Burq. D\'ecroissance de l'\'energie locale de l'\'equation des ondes pour le probl\`eme ext\'erieur et absence de r\'esonance au voisinage du r\'eel. \textit{Acta Math.} 180(1) (1998), 1--29


\bibitem[Bu03]{bu03} N. Burq. Global Strichartz estimates for nontrapping geometries: about an article by H. Smith and C. Sogge. \textit{Comm. Partial Differential Equations} 28(9-10) (2003) 1675--1683


\bibitem[CaVo02]{cv} F. Cardoso and G. Vodev. Uniform Estimates of the Resolvent of the Laplace-Beltrami Operator on Infinite Volume Riemannian Manifolds. II. \textit{Ann. Henri Poincar\'e} 4(3) (2002), 673--691



\bibitem[ChIk20]{chik20} R. Charao and R. Ikehata. A note on decay rates of the local energy for wave equations with Lipschiz wavespeeds. \textit{J. Math. Anal. Appl.} 483(2) (2020), 1--14



\bibitem[Da14]{da} K. Datchev. Quantitative limiting absorption  principle in the semiclassical limit. \textit{Geom. Func. Anal.} 24(3) (2014), 740--747

\bibitem[DaSh20]{dash20} K. Datchev and J. Shapiro. Semiclassical Estimates for Scattering on the Real Line. \textit{Comm. Math. Phys.} 376(3) (2020), 2301--2308




 
\bibitem[DyZw19]{dz} S. Dyatlov and M. Zworski. Mathematical Theory of Scattering Resonances. Graduate Studies in Mathematics 200. American Mathematical Society, Providence, RI (2019)

\bibitem[Fo07]{fo} G. Folland. Real analysis: modern techniques and their applications, 2nd ed. Wiley, NY (2007) 



\bibitem[GaSh22]{gash22} J. Galkowski and J. Shapiro. Semiclassical resolvent bounds for long range Lipschitz potentials. \textit{Int. Math. Res. Not. IMRN} 2022(18) (2022), 14134--14150



\bibitem[HiZw17]{hizw17} P. Hintz and M. Zworski. Wave decay for star-shaped obstacles in $\R^3$: papers of Morawetz and Ralston revisited. \textit{Math Proc. R. Ir. Acad.} 117A(2) (2017), 47--62




\bibitem[KlVo19]{kv} F. Klopp and M. Vogel. Semiclassical resolvent estimates for bounded potentials. \textit{Pure Appl. Anal.} 1(1) (2019), 1--25


\bibitem[LMP62]{lmp}  P. D. Lax, C. S. Morawetz, and R. S. Phillips. The exponential decay of solutions of the wave equation in the exterior of a star-shaped obstacle. \textit{Bull Amer. Math. Soc.} 68 (1962), 593--595.

\bibitem[LaPh89]{lp} P. D. Lax and R. S. Phillips. Scatttering Theory. Revised Edition. Academic Press, Inc.,  San Diego, 1989.


\bibitem[Mo61]{mor} C. S. Morawetz. The decay of solutions of the exterior initial-boundary value problem for the wave equation. \textit{Comm. Pure Appl. Math.} 14 (1961), 561--568.

\bibitem[Pi22]{pi22} I. Pinelis. Chain rule for $e^f$, where $f$ has bounded variation. Math Overflow, https://mathoverflow.net/q/425346






\bibitem[Re22a]{re22a} C. Remling. Chain rule for $e^f$, where $f$ has bounded variation. Math Overflow, https://mathoverflow.net/q/425346

\bibitem[Re22b]{re22b} C. Remling. Is the Sobolev space $H^1(\mathbb{R})$ contained in the domain of $(-\partial_x \alpha(x) \partial_x)^{1/2}$? Math Overflow, https://mathoverflow.net/q/432325


\bibitem[Sh18]{sh18}
J. Shapiro. Local energy decay for Lipschitz wavespeeds. \textit{Comm. Partial Differential Equations} 43 (5) (2018), 839--858




\bibitem[SjZw91]{sz91}   J. Sj\"ostrand and M. Zworski. Complex Scaling and the Distribution of Scattering Poles. \textit{J. Amer. Math. Soc.} 4(4) (1991), 729--769




\bibitem[Va89]{vai} B. R. Vainberg. \textit{Asymptotic methods in equations of mathematical physics}. Gordon and Breach Science Publishers. New York, 1989. Translated from the Russian by E. Primrose.


\bibitem[Vo99]{vo99} G. Vodev. On the uniform decay of the local energy.  \textit{Serdica Math.} J. 25(3) (1999), 191--206

\bibitem[Vo14]{vo14}  G. Vodev. Semi-classical resolvent estimates and regions free of resonances. \textit{Math. Nach.} 287(7) (2014), 825--835





\bibitem[VoHu85]{vh} A. I. Vol'pert and S. I. Hudjaev. Analysis in classes of discontinuous functions and equations of mathematical physics. Martinus Nijhoff Publishers, Dordrecht (1985)






\bibitem[Ze05]{ze} A. Zettl, Sturm--Liouville Theory. Mathematical Surveys and Monographs 121. American Mathematical Society, Providence, RI (2005)





\end{thebibliography}
\end{document}